\theoremstyle{plain} 
\newtheorem{lemma}{Lemma}
\newaliascnt{prop}{lemma}
\newtheorem{prop}[prop]{Proposition}
\newaliascnt{coro}{lemma}
\newaliascnt{theo}{lemma}
\newtheorem{theo}[theo]{Theorem}
\newtheorem*{theo*}{Theorem}
\theoremstyle{definition}
\newaliascnt{defn}{lemma}
\theoremstyle{remark}
\newaliascnt{rem}{lemma}
\newtheorem{rem}[rem]{Remark}
\begin{document}

\title{Subgroup Isomorphism Problem for units of integral group rings}
\author{Leo Margolis}
\date{\today}

\begin{abstract} The Subgroup Isomorphism Problem for Integral Group Rings asks for which finite groups $U$ it is true that if $U$ is isomorphic to a subgroup of $\mathrm{V}(\mathbb{Z}G)$, the group of normalized units of the integral group ring of the finite group $G$, it must be isomorphic to a subgroup of $G$. The smallest groups known not to satisfy this property are the counterexamples to the Isomorphism Problem constructed by M. Hertweck. However the only groups known to satisfy it are cyclic groups of prime power order and elementary-abelian $p$-groups of rank 2. We prove the Subgroup Isomorphism Problem for $C_4 \times C_2$. Moreover we prove that if the Sylow 2-subgroup of $G$ is a dihedral group, any 2-subgroup of $\mathrm{V}(\mathbb{Z}G)$ is isomorphic to a subgroup of $G.$ 
\end{abstract}

\maketitle

\renewcommand{\thefootnote}{}
 \noindent {\bf Mathematics Subject Classification (2010):} 16U60, 16S34, 20C20 \\
 {\bf Keywords:} Unit Group, Integral Group Ring, Isomorphism Problem \footnote{This research was partly supported by the Sonderforschungsbereich 701 at the University of Bielefeld}

  \vspace{.5cm}
\section{Introduction}

The first to study the unit group of an integral group ring $\mathbb{Z}G$ of a finite group $G$ was G. Higman in his PhD thesis \cite{HigmanThesis}. There, in \textsection 5, he states it "plausible" that any finite subgroup of units $U$ is isomorphic to a group of trivial units, i.e. units of the form $\pm g$ with $g \in G.$ See \cite{Sandling} for details on the thesis. Denote by $\mathrm{V}(\mathbb{Z}G)$ the normalized units of $\mathbb{Z}G$, i.e. the units whose coefficients sum up to 1. In view of the fact that $\pm \mathrm{V}(\mathbb{Z}G)$ are all the units of $\mathbb{Z}G$ it is enough to study normalized units.

After Higman's question remained open for quite a long time, a negative answer to it was given by M. Hertweck's counterexample to the isomorphism problem \cite{HertweckIso}: There are finite groups $G$ and $H$ such that $\mathbb{Z}G$ is isomorphic to $\mathbb{Z}H$, but $G$ and $H$ are not isomorphic. In particular, $\mathrm{V}(\mathbb{Z}G)$ contains a subgroup isomorphic to $H$, but $G$ does not, and thus provides a counterexample to the question of Higman. However Hertweck's counterexample is a rather complicated group of or order $2^{21} \cdot 97^{28}$ and derived length 4 and leaves Higman's question totally open for smaller and/or ``easier'' groups. This leads to the following question:\\

{\textit{Subgroup Isomorphism Problem} \ \textbf{(SIP)}}: For which finite groups $U$ does the following statement hold: If $\mathrm{V}(\mathbb{Z}G)$ contains a subgroup isomorphic to $U$ for some finite group $G$, then $G$ contains a subgroup isomorphic to $U$.\\

Not explicitly studied, this was only known to hold for cyclic groups of prime power order, following from a result of Cohn and Livingstone that the exponents of $G$ and $\mathrm{V}(\mathbb{Z}G)$ coincide \cite[Corollary 4.1]{CohnLivingstone}. Z. Marciniak raised the question for the Klein four group at an ICM Satellite Conference in 2006 and W. Kimmerle immediately gave a positive answer \cite{KimmiC2C2}, followed by a positive answer for elementary abelian $p$-groups of rank 2 for odd primes $p$ by M. Hertweck \cite{HertweckCpCp}. Since then no further progress has been made on the question. The name "Subgroup Isomorphism Problem" was coined by W. Kimmerle in several talks and is also recorded in \cite{KimmerleSylow}. The problem appeared for the first time in the literature in \cite[Problem 19]{Ari}. Denote by $C_n$ the cyclic group of order $n.$ The purpose of this article is the proof of the following result:

\begin{theo}\label{SIPC2C4}
Let $G$ be a finite group. If $\mathrm{V}(\mathbb{Z}G)$ contains a subgroup isomorphic to $C_4 \times C_2$, then so does $G$, i.e. the Subgroup Isomorphism Problem holds for $C_4 \times C_2.$
\end{theo}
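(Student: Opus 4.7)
The plan is to argue by contradiction. Suppose $G$ is a finite group of smallest order such that $\mathrm{V}(\mathbb{Z} G)$ contains a subgroup $U = \langle u, v\rangle \cong C_4\times C_2$ but $G$ contains no copy of $C_4\times C_2$.

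\textbf{Reduction of the Sylow $2$-structure.} Because $u$ has order $4$, the Cohn--Livingstone theorem forces $4\mid |G|$, so the Sylow $2$-subgroup $P$ of $G$ contains an element of order $4$. Since $P$ has no $C_4\times C_2$ subgroup, every element of order $4$ in $P$ must have cyclic centraliser in $P$, and a classical classification of the $2$-groups with this property leaves $P$ cyclic, dihedral, generalised quaternion, or semidihedral.

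\textbf{Cyclic and dihedral cases.} If $P$ is dihedral then the second main theorem of the paper embeds $U$ into $G$; but no dihedral $2$-group contains $C_4\times C_2$, a contradiction. If $P$ is cyclic then, since $2$ is the smallest prime dividing $|G|$, a classical normal $p$-complement argument gives $G = P\ltimes N$ with $N = O_{2'}(G)$, and then a Weiss-type lifting (available in this split situation) embeds the $2$-subgroup $U$ into $P$, forcing $U$ to be cyclic -- again a contradiction.

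\textbf{Generalised quaternion and semidihedral cases.} Since $|U|$ is coprime to $|O_{2'}(G)|$, the image of $U$ in $\mathrm{V}(\mathbb{Z}[G/O_{2'}(G)])$ is again $C_4\times C_2$, so minimality of $G$ allows us to assume $O_{2'}(G)=1$. The Brauer--Suzuki theorem (generalised quaternion case) together with the Alperin--Brauer--Gorenstein theorem (semidihedral case) then restricts $G$ to a short explicit list of (near-)simple groups with small centres, notably $\mathrm{SL}_2(q)$, $\mathrm{PSL}_2(q)$, $\mathrm{PSL}_3(q)$, $\mathrm{PSU}_3(q)$, $A_7$ and $M_{11}$ for appropriate congruences on $q$. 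For each candidate $G$ I would apply the HeLP / Luthar--Passi machinery to the seven non-identity elements of $U$: the Berman--Higman theorem kills the identity augmentation, the Luthar--Passi inequalities (evaluated at the explicit character tables of the candidate groups) constrain the remaining partial augmentations, and the commutation relations inside $U$ force the distributions for $u$, $u^2$, $v$ and $uv$ to be mutually compatible. The goal is to show that any consistent solution is realised by an honest $C_4\times C_2$ subgroup of $G$, contradicting the hypothesis.

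\textbf{Main obstacle.} The decisive case is expected to be the semidihedral one, specifically the candidates $\mathrm{PSL}_3(q)$ with $q\equiv -1\pmod 4$ and $M_{11}$, both of which admit several conjugacy classes of involutions and of elements of order $4$; the partial-augmentation inequalities may then leave the augmentation pattern of $u$ and $v$ under-determined. I expect this obstruction to be overcome by a Brauer-character reduction modulo a carefully chosen odd prime $p$ -- so that the image of $U$ survives in $\mathrm{V}(\mathbb{F}_p G)$ and the block decomposition of $\mathbb{F}_pG$ furnishes the additional constraints needed -- combined, if necessary, with explicit centraliser information drawn from the structure of these almost simple groups.
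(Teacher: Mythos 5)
Your outline reproduces the architecture of the paper's proof --- classify the $2$-groups without a $C_4\times C_2$ subgroup, dispatch the abelian and generalized quaternion cases by known results, the dihedral case by Theorem \ref{2Dieder}, and reduce the remaining case(s) modulo $O_{2'}(G)$ to the classification of Gorenstein--Walter/Alperin--Brauer--Gorenstein --- but it stops exactly where the real work begins, and the data you propose to feed into that work is partly wrong. First, your list of candidates for the semidihedral case is inaccurate. The Alperin--Brauer--Gorenstein classification splits into $QD$-, $Q$- and $D$-groups: the $QD$-case yields a normal subgroup $\operatorname{PSL}(3,q)$, $\operatorname{PSU}(3,q)$ or $M_{11}$ of odd index $m$ (the odd index is essential, since every contradiction in the paper ultimately reads ``$m/2\in\mathbb{Z}$ with $m$ odd''); the $Q$-case yields certain extensions $\operatorname{SL}_k(2,q)$, $\operatorname{SU}_k(2,q)$ of $\operatorname{SL}(2,q)$; and the $D$-case, after discarding the candidates whose Sylow $2$-subgroups are not actually semidihedral, leaves only $N=\operatorname{PGL}^*(2,q)$, a non-split extension of $\operatorname{PSL}(2,q)$ existing only for square $q$. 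This last group is entirely absent from your list, and it is the case requiring the most effort in the paper (a bespoke $4$-dimensional modular representation built from the action on Hermitian matrices over $\mathbb{F}_r$). Also, for generalized quaternion Sylow $2$-subgroups there is no classification to invoke: Brauer--Suzuki gives only a central involution, which via Berman--Higman forces $\mathrm{V}(\mathbb{Z}G)$ to have a unique involution and hence no $C_4\times C_2$; this is Kimmerle's result, which the paper simply cites.

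Second, and more seriously, the endgame is both logically misstated and left unexecuted. In the quaternion and semidihedral cases $G$ provably contains no $C_4\times C_2$, so the goal cannot be ``to show that any consistent solution is realised by an honest $C_4\times C_2$ subgroup of $G$''; you must show that \emph{no} consistent system of partial augmentations exists. Your own ``Main obstacle'' paragraph concedes that the Luthar--Passi inequalities on the cyclic subgroups of $U$ do not suffice (the paper confirms this explicitly for the $QD$-case), and the remedy you gesture at --- block decomposition of $\mathbb{F}_pG$ --- is not the one that works and is not carried out. The missing idea is Lemma \ref{ModHeLP}(b): for the \emph{whole non-cyclic} group $U$ and a ($p$-Brauer or ordinary) character $\chi$ of $G$ with $p\nmid |U|$, the quantity $\frac{1}{|U|}\sum_{u\in U}\chi(u)\psi(u^{-1})$ is a non-negative integer. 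Combined with explicitly constructed natural representations in the defining characteristic (the $10$-dimensional action on cubic forms for $\operatorname{PSL}(3,q)$ and $\operatorname{PSU}(3,q)$, the determinant character for the $Q$-case, the trace-zero and Hermitian-matrix actions for $\operatorname{PGL}^*(2,q)$) and the Cohn--Livingstone congruences $\varepsilon_{2a}(u)\equiv 0 \bmod 2$, $\varepsilon_{4a}(u)\not\equiv 0\bmod 2$, this is what produces the parity contradiction in every case. Without identifying this mechanism and the correct list of groups, the proposal is a plan rather than a proof.
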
 

Doing just a little more we obtain also the following results, being of independent interest. The investigation of the following Theorem was proposed in \cite[Comment before Example 7]{HertweckCpCp}.

\begin{theo}\label{2Dieder}
Let $G$ be a finite group possessing a dihedral Sylow $2$-subgroup. Then any $2$-subgroup of $\mathrm{V}(\mathbb{Z}G)$ is isomorphic to a subgroup of $G.$ 
\end{theo}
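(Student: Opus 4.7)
The plan is to show that every finite $2$-subgroup $U$ of $\mathrm{V}(\mathbb{Z}G)$ is isomorphic to a subgroup of a dihedral Sylow $2$-subgroup $P$ of $G$. By the Cohn--Livingstone theorem $\exp(U)$ divides $\exp(G) = \exp(P)$, and every subgroup of a dihedral $2$-group is cyclic, a Klein four group, or a dihedral group, so it suffices to pin the isomorphism type of $U$ down to one of these three.

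The key reduction is an application of Theorem~\ref{SIPC2C4}. In a dihedral $2$-group $D_{2^n}$ (with $n \geq 3$) the centralizer of any element of order $4$ is the unique maximal cyclic subgroup, so $P$ contains no copy of $C_4 \times C_2$, and consequently neither does $G$. Hence by Theorem~\ref{SIPC2C4}, $U$ must be $C_4 \times C_2$-free. A classical classification of $C_4 \times C_2$-free finite $2$-groups (readable off Blackburn's work on $2$-groups of maximal class) says that such a group is cyclic, elementary abelian, or of maximal class, that is, dihedral, semi-dihedral, or generalized quaternion. Cyclic and dihedral groups pose no problem; what remains is to rule out $U \cong (C_2)^n$ with $n \geq 3$, $U \cong Q_{2^n}$ with $n \geq 3$, and $U \cong SD_{2^n}$ with $n \geq 4$.

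The generalized quaternion and semi-dihedral cases I would handle via a partial augmentation argument. Each such $U$ has a unique central involution $z$ which, by Hertweck's theorem on torsion units of order two, is rationally conjugate to an involution $t \in G$. Each $U$ also contains at least two non-commuting cyclic subgroups of order $4$, all with square $z$. Since in $P$ every element of order $4$ lies in the unique cyclic maximal subgroup and therefore has an abelian centralizer, the rational conjugacy of these order-$4$ elements into $G$, combined with a computation of partial augmentations at the $G$-class of $t$, should yield a contradiction.

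The main obstacle is the elementary abelian case $U \cong (C_2)^n$ with $n \geq 3$, for which Theorem~\ref{SIPC2C4} offers no leverage (such $U$ has exponent $2$ and hence contains no $C_4 \times C_2$), and SIP is only known for $C_2 \times C_2$ among elementary abelian $2$-groups. The strategy here is to invoke the Gorenstein--Walter structure theorem: $G/O(G)$ is isomorphic either to a subgroup of $\mathrm{P}\Gamma L_2(q)$ for some odd $q$, to $A_7$, or has a normal $2$-complement. For each of these three structural types one either appeals to the literature on units of $\mathbb{Z}[\mathrm{PSL}_2(q)]$ and $\mathbb{Z}A_7$, or applies Weiss-type rigidity in the $2$-nilpotent case to see that $U$ already embeds in $G$, thus ruling out a copy of $(C_2)^3$ in $\mathrm{V}(\mathbb{Z}G)$.
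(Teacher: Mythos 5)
Your overall skeleton (reduce via Cohn--Livingstone and the classification of $C_4\times C_2$-free $2$-groups to excluding $(C_2)^3$, $Q_{2^n}$, $SD_{2^n}$ and $C_4\times C_2$ itself, then invoke Gorenstein--Walter) matches the paper, but the proposal has a fatal circularity at its first step: you invoke Theorem~\ref{SIPC2C4} to conclude that $U$ contains no $C_4\times C_2$. The proof of Theorem~\ref{SIPC2C4} uses Theorem~\ref{2Dieder} to dispose of exactly the case where the Sylow $2$-subgroup is dihedral, so you are assuming the statement you are trying to prove. The absence of $C_4\times C_2$ in $\mathrm{V}(\mathbb{Z}G)$ for $G$ with dihedral Sylow $2$-subgroup is one of the genuinely new facts that must be established here; the paper does this in case (i) of Gorenstein--Walter by a concrete computation, showing via Lemma~\ref{ModHeLP} that $\frac{1}{8}\sum_{u\in U}\psi_+(u)=\frac{m}{2}$ would have to be a non-negative integer, where $\psi_+$ comes from the conjugation action of $\operatorname{GL}(2,q)$ on trace-zero matrices and $m$ is odd. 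The same character kills $(C_2)^3$; there is no prior literature result to ``appeal to'' for rank-$3$ elementary abelian subgroups of $\mathrm{V}(\mathbb{Z}[\operatorname{PSL}(2,q)])$, so that case also needs an actual argument.

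The second genuine gap is the quaternion case. Your plan --- rationally conjugate the order-$4$ elements into $G$ and derive a contradiction from partial augmentations at the class of the central involution --- cannot work: in case (i) all units of order $2$ and $4$ have trivial partial augmentations (this is Lemma~\ref{PSL}), and a hypothetical $Q_8\le\mathrm{V}(\mathbb{Z}G)$ whose six order-$4$ elements all lie over the class $4a$ and whose involution lies over $2a$ is perfectly consistent at the level of partial augmentations of individual elements; partial augmentations do not see that the two cyclic subgroups of order $4$ fail to commute. The paper needs a different tool here: it takes the faithful $2$-dimensional character $\lambda$ of $Q_8$ (Frobenius--Schur indicator $-1$), a real-valued character $\eta$ of $G$ of degree $m(q+\varepsilon)$ induced from $\operatorname{PSL}(2,q)$ whose Schur index is $1$, and shows $\langle\lambda,\eta\rangle_U$ is odd, contradicting the fact that a quaternionic irreducible must occur with even multiplicity in a character afforded by a real representation. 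Without an argument of this strength the quaternion (and hence semidihedral) exclusion does not go through. Finally, note that $A_7$ and the $2$-group case are correctly delegated to Hertweck and to Weiss, as in the paper.
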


Moreover we obtain the following generalisation of \cite[Proposition 3.4]{AndreasKimmi} and \cite[Proposition 4.7]{KimmerleSylow}.

\begin{prop}\label{Ord8}
Let $G$ be a finite group whose Sylow $2$-subgroup has at most order $8$ and assume moreover that $G$ is not isomorphic to the alternating group of degree $7$. Then any $2$-subgroup $U$ of $\mathrm{V}(\mathbb{Z}G)$ is rationally conjugate to a subgroup of $G$, i.e. there exists a unit $x$ in the rational group algebra $\mathbb{Q}G$ and a subgroup $P$ of $G$ such that $x^{-1}Ux = P$.
\end{prop}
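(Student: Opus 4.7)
The plan is to pass to the case where $U$ is a Sylow $2$-subgroup of $\mathrm{V}(\mathbb{Z}G)$. Any $2$-subgroup of $\mathrm{V}(\mathbb{Z}G)$ sits inside some such Sylow $S$, and by the known Sylow-type order bound for integral group rings the order of $S$ divides the $2$-part of $|G|$, which is at most $8$. A rational conjugacy of $S$ into $G$ automatically provides one for $U$, so from now on one may take $U = S$.

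Next I would run through the isomorphism types of $2$-groups of order at most $8$. The cases $|U| \leq 4$ and $U \cong C_2 \times C_2$ are already covered by \cite[Proposition 3.4]{AndreasKimmi} and \cite[Proposition 4.7]{KimmerleSylow} via the HeLP method and partial augmentation vanishing. For the five groups of order $8$ the argument splits by isomorphism type. If $U$ is cyclic of order $8$, then rational conjugacy of a single generator forces it for $U$, and this reduces to a HeLP computation constrained by the fact that the $2$-part of $|G|$ is at most $8$. If $U \cong C_4 \times C_2$, then Theorem \ref{SIPC2C4} furnishes an abstract embedding into $G$, and the rational conjugacy is recovered by first pinning down the unique maximal cyclic subgroup of $U$ and then placing the remaining involution inside its centralizer. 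If $U \cong C_2^3$, one applies Hertweck's $C_p \times C_p$ result to every rank-two section of $U$ and glues the local data. If $U \cong D_8$, Theorem \ref{2Dieder} gives an abstract embedding, and the upgrade to rational conjugacy proceeds through the Gorenstein--Walter classification. If $U \cong Q_8$, then the Brauer--Suzuki theorem restricts the structure of $G$ modulo its $2$-core to a short list of almost simple groups, each of which can be handled directly.

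The main obstacle is the $D_8$ case, which is precisely where the $A_7$ exception enters. By Gorenstein--Walter, a finite group with dihedral Sylow $2$-subgroup of order $8$ has $G/\mathrm{O}(G)$ isomorphic to $A_7$, to a subgroup of $\mathrm{P}\Gamma\mathrm{L}_2(q)$, or solvable. Since $|U|$ is coprime to $|\mathrm{O}(G)|$, rational conjugacy of $U$ descends to and lifts from $G/\mathrm{O}(G)$, so the problem reduces to the groups appearing in the classification. The solvable and $\mathrm{P}\Gamma\mathrm{L}_2(q)$ cases admit a direct treatment by the established character-theoretic machinery, whereas $A_7$ itself is genuinely problematic and is therefore excluded from the hypothesis. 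The remaining isomorphism types of order $8$ are handled by analogous structural reductions (Brauer--Suzuki for $Q_8$, and direct analysis in the abelian Sylow cases) without any further exceptional subcase, so that $A_7$ is the unique group that needs to be set aside.
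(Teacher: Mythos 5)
Your high-level skeleton for the dihedral case (kill $O_{2'}(G)$ via the coprime-normal-subgroup lemma, invoke Gorenstein--Walter, exclude $A_7$) is the same as the paper's, but the proposal has two genuine gaps. First, the $Q_8$ step is based on a false premise: the Brauer--Suzuki theorem does \emph{not} reduce $G$ to ``a short list of almost simple groups'' --- on the contrary, it says a group with quaternion Sylow $2$-subgroup is never simple and has a central involution $g$ (modulo the odd core). The actual argument runs: $g$ is the unique involution of $\mathrm{V}(\mathbb{Z}G)$ by Berman--Higman, a unit $u$ of order $4$ has $\varepsilon_g(u)=0$, and if $G$ has several classes of order-$4$ elements one passes to $G/\langle g\rangle$, whose Sylow $2$-subgroup has order $4$, where involutions are known to have trivial partial augmentations; one then finishes with Kimmerle's reduction to cyclic subgroups. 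Nothing resembling a classification of $G$ is available or needed here.

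Second, the passage from ``$U$ is isomorphic to a subgroup of $G$'' to ``$U$ is rationally conjugate to a subgroup of $G$'' is the entire content of the proposition for non-cyclic $U$, and you never actually perform it. ``Placing the remaining involution inside its centralizer'' ($C_4\times C_2$), ``gluing the local data'' from rank-two sections ($C_2^3$), and ``direct treatment by the established character-theoretic machinery'' ($D_8$) are placeholders, not arguments. The paper's mechanism is concrete: one shows that there is an isomorphism from $U$ onto a subgroup $S\le G$ preserving the values of \emph{all} ordinary irreducible characters of $G$, and then cites a criterion of Valenti to conclude rational conjugacy; pinning down which involutions of a dihedral $U$ go to the class $2a$ versus $2b$ uses the sign character $\chi$ of $G/\operatorname{PSL}(2,q)$ together with the integrality of $\tfrac{1}{|U|}\sum_{u\in U}\chi(u)$ and the Cohn--Livingstone congruences. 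Note also that the cases $U\cong C_4\times C_2$, $C_2^3$ and $C_8$ simply do not arise once the Sylow $2$-subgroup is nonabelian of order $8$ (by Theorem \ref{2Dieder}, Theorem \ref{SIPC2C4} and the exponent theorem), so your case list should collapse rather than be patched; and your appeal to a ``Sylow-type order bound'' inside the infinite group $\mathrm{V}(\mathbb{Z}G)$ is an unnecessary detour --- it suffices to treat each finite $2$-subgroup directly.
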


\section{Preliminaries}

The following Lemma from elementary group theory is the key observation to allow the proof of Theorem \ref{SIPC2C4}.
\begin{lemma}\label{2Gruppen}
Let $P$ be a finite 2-group such that $P$ contains no subgroup isomorphic to $C_4 \times C_2.$ Then $P$ is one of the following: Elementary-abelian, cyclic, a generalized quaternion group, a dihedral group or a semidihedral group.
\end{lemma}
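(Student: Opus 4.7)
My plan is to proceed by strong induction on $|P|$. If $P$ is abelian, writing it as a direct product of cyclic $2$-groups immediately shows that forbidding $C_4\times C_2$ forces either rank~$1$ (cyclic) or exponent~$2$ (elementary abelian). So assume $P$ is non-abelian. If $P$ has a unique involution, a classical theorem identifies $P$ as cyclic or generalized quaternion; thus we may further assume $P$ has more than one involution, and the task is to show that $P$ is dihedral or semidihedral.

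I would first pin down the center. If $Z(P)$ contained an element $z$ of order~$4$, then any involution $y\notin\langle z\rangle$ would commute with $z$ and yield $\langle z,y\rangle\cong C_4\times C_2$; so all involutions would lie in $\langle z\rangle$, contradicting having at least two. If $Z(P)$ were elementary abelian of order $\ge 4$, two independent central involutions together with any element of order~$4$ (present since $P$ is non-abelian, hence of exponent~$\ge 4$) produce $C_4\times C_2$ directly. Hence $|Z(P)|=2$. Now choose $x\in P$ of maximal order~$2^n$, so $n\ge 2$. Any element commuting with~$x$ together with~$x$ generates an abelian subgroup containing $C_{2^n}$ but no $C_4\times C_2$, which must therefore be cyclic of order~$2^n$ by maximality; so $C_P(x)=\langle x\rangle$ and the central involution is $z=x^{2^{n-1}}$.

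The main step, and where I expect the real work to lie, is to show that $\langle x\rangle$ has index~$2$ in $P$. I would apply the inductive hypothesis to a maximal subgroup $M$ of~$P$ containing~$\langle x\rangle$: such an $M$ is one of the listed groups, cannot be elementary abelian (it contains $C_{2^n}$), and if cyclic must equal $\langle x\rangle$ by maximality of~$|x|$. In all remaining cases $M$ is dihedral, generalized quaternion, or semidihedral with $\langle x\rangle$ as its cyclic subgroup of index~$2$, giving $|M|=2^{n+1}$ and $[P:\langle x\rangle]=4$. To exclude this for $n\ge 3$, note that $\langle x\rangle$ is then characteristic in $M$ (as the unique cyclic subgroup of order~$2^n$) and hence normal in $P$, so $P/\langle x\rangle$ embeds as a subgroup of order~$4$ in $\mathrm{Aut}(C_{2^n})\cong C_2\times C_{2^{n-2}}$. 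A direct inspection shows that every order-$4$ subgroup of this automorphism group contains the element $x\mapsto x^{1+2^{n-1}}$; any preimage $g\in P$ of this element centralizes $x^2$ while lying outside $\langle x\rangle$, so $\langle g,x^2\rangle$ is abelian with $x^2$ of order $\ge 4$ and $g\notin\langle x^2\rangle$, yielding $C_4\times C_2$ --- a contradiction. The remaining case $n=2$ I would handle separately: every element of order~$4$ then has square~$z$, so $P/\langle z\rangle$ has exponent~$2$ and $P$ is extraspecial, but extraspecial $2$-groups of order $\ge 32$ contain $C_4\times C_2$ (take elements of order~$4$ from two distinct factors of the central product), forcing $|P|=8$ and $[P:\langle x\rangle]=2$.

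With $[P:\langle x\rangle]=2$ established, the classical Burnside classification of $2$-groups with a cyclic subgroup of index~$2$ leaves exactly the cyclic group, $C_{2^n}\times C_2$, dihedral, generalized quaternion, semidihedral, and (for $|P|\ge 16$) the modular maximal-cyclic group. The cyclic case cannot occur, as the exponent of $P$ is $2^n$; $C_{2^n}\times C_2$ contains $C_4\times C_2$; $Q_{2^{n+1}}$ has a unique involution and was already excluded; and in the modular maximal-cyclic group the order-$2$ complement commutes with $x^2$, again producing $C_4\times C_2$. The surviving possibilities are dihedral and semidihedral, as required.
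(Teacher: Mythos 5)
Your strategy is genuinely different from the paper's. The paper handles the non-abelian case in a few lines by showing that $P$ admits no non-cyclic abelian normal subgroup except possibly a self-centralizing Klein four group, and then invokes the classification results \cite[5.3.9, 5.3.10]{Kurzweil}. You instead run an induction that establishes $C_P(x)=\langle x\rangle$ for $x$ of maximal order $2^n$, forces $[P:\langle x\rangle]=2$, and finishes with Burnside's classification of $2$-groups with a cyclic subgroup of index $2$. Your route is longer but more self-contained, using only very classical ingredients. The computation of the centre, the identification $C_P(x)=\langle x\rangle$, the extraspecial argument for $n=2$, and the final elimination of $C_{2^n}\times C_2$, of $Q_{2^{n+1}}$ and of the modular maximal-cyclic group from Burnside's list are all correct.

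There is, however, one step that does not hold as written: when excluding $[P:\langle x\rangle]=4$ for $n\ge 3$, you take an \emph{arbitrary} preimage $g$ of the automorphism $x\mapsto x^{1+2^{n-1}}$ and conclude that $\langle g,x^2\rangle$, being abelian with $x^2$ of order $\ge 4$ and $g\notin\langle x^2\rangle$, contains $C_4\times C_2$. This inference needs $\langle g,x^2\rangle$ to be non-cyclic, which is not automatic: an abelian group generated by an element of order $\ge 4$ and an element outside the cyclic group it generates can perfectly well be cyclic, and preimages of this particular automorphism can indeed generate a cyclic group together with $x^2$ (for instance, in the modular group $\langle x,a \mid x^{2^n}=a^2=1,\ x^a=x^{1+2^{n-1}}\rangle$ the preimage $ax$ satisfies $(ax)^2=x^{2+2^{n-1}}$, which generates $\langle x^2\rangle$). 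The repair is easy and local. Either choose the preimage well: among $gx^k$ one can adjust the parity of $k$ so that $(gx^k)^2\in\langle x^4\rangle$, whence $\langle gx^k\rangle\cap\langle x^2\rangle\ne\langle x^2\rangle$ and $\langle gx^k,x^2\rangle$ is non-cyclic of exponent $\ge 4$. Or argue globally: let $K$ be the full preimage in $P$ of $\langle x\mapsto x^{1+2^{n-1}}\rangle$; then $K$ is non-abelian of order $2^{n+1}$ with $x^2\in Z(K)$, so $|Z(K)|\ge 2^{n-1}\ge 4$ and $K$ is neither cyclic nor generalized quaternion, hence has an involution $t$ outside $\langle x^2\rangle$, and $\langle x^2,t\rangle\cong C_{2^{n-1}}\times C_2\supseteq C_4\times C_2$. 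With this patch your proof is complete.
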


\begin{rem}
Note that semidihedral groups are called "quasidihedral" groups by many authors. A presentation of a semidihedral group of order $2^{n+1}$ is given by
\[ \langle h, a \ | \ h^{2^n} = a^2 = 1, h^a = h^{-1 + 2^{n-1}} \rangle. \] 
For definitions of all groups mentioned in the Lemma see \cite[5.3]{Kurzweil}.
\end{rem}

\begin{proof}  If $P$ is abelian, the statement is clear and if $P$ is not abelian and contains only one involution, $P$ is a quaternion group \cite[5.3.7]{Kurzweil}. So assume $P$ is not abelian and contains at least two involutions. Since $P$ then contains an element of order $4$, say $g$, we have $C_2 \cong Z(P) = \langle g^2 \rangle.$ Assume $P$ possesses a non-cyclic abelian normal subgroup $M$, which must be elementary-abelian, and let $a$ be an element of $M$ which is not central in $P$. Then $C_P(M) = N$ is also normal, elementary-abelian and contains $g^2$. If the order of $N$ is $4$, we have $M = N = C_P(a) \cong C_2 \times C_2$ and $P$ is a dihedral or semidihedral group by \cite[5.3.10]{Kurzweil}. So assume $|N| \geq 8$ and let $b$ be an element of $N$ which is not central in $P$ such that $ab$ is also not central in $P.$ Since $a^ga$ commutes with $g$ we have $a^g = ag^2$ and in the same way $b^g = bg^2.$ But then $(ab)^g = ab$, contradicting the existence of $N$ and $M.$ Hence there is no non-cyclic abelian normal subgroup in $P$ and $P$ is dihedral or semidihedral by \cite[5.3.9]{Kurzweil}. 
\end{proof}

Today a main open question concerning torsion units in $\mathbb{Z}G$ is the Zassenhaus Conjecture. It states that for every finite group $G$ and every torsion unit $u \in \mathrm{V}(\mathbb{Z}G)$ there exist a unit $x$ in the rational group algebra $\mathbb{Q}G$ and an element $g \in G$ such that $x^{-1}ux = g.$ In this case one says that $u$ and $g$ are rationally conjugate.
 The main notion to study torsion subgroups of $\mathbb{Z}G$ are the so called partial augmentations. For a conjugacy class $x^G$ in $G$ and an element $u = \sum\limits_{g \in G} z_gg$ in $\mathbb{Z}G$ the sum
\[\varepsilon_x(u) = \sum\limits_{g \in x^G} z_g\]
is called the partial augmentation of $u$ with respect to $x.$ The connection between partial augmentations and the Zassenhaus Conjecture is established by \cite[Theorem 2.5]{MarciniakRitterSehgalWeiss}: A torsion unit $u \in \mathrm{V}(\mathbb{Z}G)$ of order $n$ is rationally conjugate to an element of $G$ if and only if $\varepsilon_x(u^d) \geq 0$ for all $x \in G$ and divisors $d$ of $n.$ If $u$ is a torsion unit and $x$ is an element of $G$ of order not dividing the order of $u$ then $\varepsilon_x(u) = 0$ \cite[Theorem 2.3]{HertweckBrauer}. Moreover $\varepsilon_z(u) = 0$ for every central element $z$ of $G$, unless $u = z$ by the Berman-Higman Theorem \cite[Proposition 1.5.1]{GRG1}. These facts will be used in the following without further mention.

A nowadays well known method to study the torsion units in integral group rings was introduced in \cite{LutharPassiA5} for ordinary characters and later generalized in \cite{HertweckBrauer} for Brauer characters and became known in the last years as the HeLP-method. Though developed to study finite cyclic subgroups of $\mathrm{V}(\mathbb{Z}G)$ its idea does also work for non-cyclic groups and was applied for ordinary characters a few times (namely in \cite{Hoefert}, \cite{HertweckHoefertKimmerle}, \cite{HertweckCpCp} and \cite{PSL2p3}). Since for non-cyclic groups Brauer characters have not been used so far, the following Lemma seems necessary. Note that for a prime $p$ and a $p$-Brauer character $\varphi$ of a finite group $G$ with corresponding representation $D$ we can extended the domain of $\varphi$ to the torsion units of $\mathrm{V}(\mathbb{Z}G)$ of order prime to $p$ by linearly extending $D$ to $\mathbb{Z}G$, see \cite[\textsection 3]{HertweckBrauer}.

\begin{lemma}\label{ModHeLP}
Let $G$ be a finite group, $U$ a finite subgroup of $\mathrm{V}(\mathbb{Z}G)$ of order $n$ and $p$ a prime not dividing $n.$ Let $\chi$ be the extension of a  $p$-Brauer character of $G$ to the $p$-regular elements of $\mathrm{V}(\mathbb{Z}G)$ and let $\psi$ be an ordinary character of $U.$ 
\begin{itemize}
\item[a)] \cite[Theorem 3.2]{HertweckBrauer} For every $u \in U$ we have $\chi(u) = \smashoperator[r]{\sum\limits_{\substack{x^G \\ x \ p-{\rm{regular}}}}} \varepsilon_x(u)\chi(x).$
\item[b)] $\frac{1}{n} \sum\limits_{u \in U} \chi(u)\psi(u^{-1})$ is a non-negative integer.
\end{itemize}
\end{lemma}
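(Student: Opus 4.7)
My plan is to treat the two parts separately: part a) is a direct citation of \cite[Theorem 3.2]{HertweckBrauer} and so no fresh argument is needed. The content lies in part b), which I will prove by showing that the class function $u\mapsto\chi(u)$ on $U$ is in fact an ordinary character of $U$; once this is established, the expression in question is the standard hermitian inner product of two ordinary characters of $U$, and hence a non-negative integer.

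To set things up I would fix a splitting $p$-modular system $(K,\mathcal{O},F)$ for $G$ and let $D\colon FG\to\mathrm{End}_F(V)$ be a representation whose Brauer character is $\varphi$. The map $D$ extends $\mathbb{Z}$-linearly through $\mathbb{Z}G\subseteq FG$ and restricts to a well-defined $F$-representation $D|_U\colon U\to\mathrm{GL}_F(V)$ of the finite group $U$. Since every element of $U$ has order dividing $n$ and $p\nmid n$, every $u\in U$ is $p$-regular, and by construction the Brauer character of $D|_U$ at $u$ coincides with the extension $\chi(u)$ described in \cite[\textsection 3]{HertweckBrauer}.

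The crucial step is then the observation that, because $p\nmid|U|$, the group algebra $FU$ is semisimple and the $p$-decomposition matrix of $U$ is the identity; consequently every irreducible $p$-Brauer character of $U$ equals an irreducible ordinary character of $U$, and therefore $\chi|_U$ is itself an ordinary character of $U$. Writing $\chi|_U = \sum_{\eta\in\mathrm{Irr}(U)} a_\eta \eta$ and $\psi = \sum_\eta b_\eta\eta$ with non-negative integer coefficients $a_\eta, b_\eta$, orthogonality of irreducible characters yields
\[\frac{1}{n}\sum_{u\in U}\chi(u)\psi(u^{-1}) \;=\; \sum_{\eta\in\mathrm{Irr}(U)} a_\eta b_\eta \;\in\;\mathbb{Z}_{\geq 0},\]
which is precisely the claim.

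I do not anticipate a serious obstacle: the only subtlety is checking that the \emph{Brauer character of the restricted $FU$-representation $D|_U$} agrees as a class function on $U$ with the \emph{restriction to $U$ of the extended Brauer character $\chi$}. Both are computed, by definition, as the sum of characteristic-zero lifts of the eigenvalues of $D(u)$, so this compatibility is immediate from Hertweck's construction.
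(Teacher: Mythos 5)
Your proposal is correct and follows essentially the same route as the paper: the paper likewise reduces part b) to the observation that, since $p\nmid n$, the restriction $\chi|_U$ agrees with an ordinary character $\chi'$ of $U$ (citing Curtis--Reiner, Corollary 18.11, which is exactly your ``identity decomposition matrix'' argument spelled out), and then identifies the sum as the scalar product $\langle \chi',\psi\rangle_U$ of two ordinary characters. Your additional remark on the compatibility of the restricted representation's Brauer character with the restriction of the extended character is a harmless elaboration of the same point.
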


\begin{proof} Since $p$ is not dividing $n$ there is an ordinary character $\chi'$ of $U$ such that $\chi'(u) = \chi(u)$ for every $u \in U$ by \cite[Corollary 18.11]{CR1}. So 
\[ \frac{1}{n} \sum\limits_{u \in U} \chi(u)\psi(u^{-1}) = \langle \chi', \psi \rangle_U \]
equals the scalar product of two ordinary characters and thus it is a non-negative integer.
\end{proof}

The proof of Theorem \ref{SIPC2C4} will be achieved by checking all possible cases for Sylow $2$-subgroups given by Lemma \ref{2Gruppen}. The knowledge so far is recorded in \cite{KimmerleSylow}.

\begin{prop}\label{Kimmi} \cite[Theorem 4.1, Proposition 4.8]{KimmerleSylow}
Let $G$ be a finite group whose Sylow $2$-subgroup is either abelian or a (generalized) quaternion group. Then any $2$-subgroup of $\mathrm{V}(\mathbb{Z}G)$ is isomorphic to a subgroup of $G.$
\end{prop}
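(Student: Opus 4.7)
My plan is to handle the two cases separately, since the abelian and generalized quaternion settings call for very different tools. Throughout let $U$ denote a 2-subgroup of $\mathrm{V}(\mathbb{Z}G)$ and $P$ a Sylow 2-subgroup of $G$.

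\emph{Abelian case.} The natural approach is 2-local. Any finite 2-subgroup of $\mathrm{V}(\mathbb{Z}G)$ embeds into $\mathrm{V}(\mathbb{Z}_2 G)$, where $\mathbb{Z}_2$ denotes the 2-adic integers. There I would invoke a Weiss-type rigidity theorem for 2-permutation lattices: when $P$ is abelian, the local fusion structure at $P$ is trivial, which allows $U$ to be conjugated into $P$ by a unit of $\mathbb{Z}_2 G$. Since conjugation preserves isomorphism type, this yields the embedding $U \hookrightarrow P \leq G$. The abelian hypothesis is what makes the global-to-local reduction (and the corresponding lift back) behave well; without it the 2-adic conjugation statement need not hold.

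\emph{Generalized quaternion case.} The key structural observation is that $P$ has a unique involution $t$, which is central in $P$, and hence every involution of $G$ is $G$-conjugate to $t$. In particular $G$ contains no copy of $C_2 \times C_2$. My plan is first to show that $U$ itself contains no copy of $C_2 \times C_2$; this is precisely the Subgroup Isomorphism Problem for $C_2 \times C_2$, settled positively by Kimmerle \cite{KimmiC2C2}. Once this is established, any two distinct involutions of $U$ would necessarily generate a non-abelian 2-subgroup, namely a dihedral 2-group, but every dihedral 2-group of order at least 4 contains $C_2 \times C_2$; hence $U$ has a unique involution. The classical structural result \cite[5.3.7]{Kurzweil}, already used inside Lemma \ref{2Gruppen}, then forces $U$ to be cyclic or generalized quaternion. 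Combining the order bound $|U| \leq |P|$ (consequence of the partial augmentation machinery) with Cohn--Livingstone's exponent bound $\exp(U) \leq \exp(G)$, both cases embed into $P$: a cyclic $U$ has $|U| = \exp(U) \leq \exp(P) = |P|/2$, so it fits inside the index-2 cyclic subgroup of $P$; a generalized quaternion $U$ of order $2^k$ has $2^k \leq |P|$, and $P$ contains a generalized quaternion subgroup of every such order.

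The main obstacle is the abelian case, which requires the genuinely deep 2-adic rigidity input underlying the local Sylow-type theorem; by contrast the quaternion case reduces cleanly to the already-known SIP for $C_2 \times C_2$ together with elementary 2-group theory and Cohn--Livingstone.
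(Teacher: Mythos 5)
First, a remark on the comparison itself: the paper offers no proof of this Proposition --- it is quoted directly from \cite[Theorem 4.1, Proposition 4.8]{KimmerleSylow} --- so the only question is whether your blind argument actually establishes it. Your generalized quaternion half does, and cleanly: since $G$ has no Klein four subgroup, the positive answer to SIP for $C_2\times C_2$ \cite{KimmiC2C2} forces any nontrivial $2$-subgroup $U$ of $\mathrm{V}(\mathbb{Z}G)$ to have a unique involution, hence to be cyclic or generalized quaternion by \cite[5.3.7]{Kurzweil}; the divisibility $|U|\mid |G|$ (which indeed follows from Berman--Higman, since $\mathbb{Q}G$ is then a multiple of the regular $\mathbb{Q}U$-module) together with the Cohn--Livingstone exponent bound \cite{CohnLivingstone} then places $U$ inside $P$, using that $Q_{2^n}$ contains $Q_{2^k}$ for all $3\le k\le n$ and a cyclic subgroup of order $2^{n-1}=\exp(P)$. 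This is essentially the argument one expects behind \cite[Proposition 4.8]{KimmerleSylow}.

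The abelian case, however, contains a genuine gap. The ``Weiss-type rigidity theorem'' you invoke does not exist in the generality you need: Weiss's theorem and its known extensions conjugate a finite $p$-subgroup of $\mathrm{V}(\mathbb{Z}_pG)$ into $G$ only in the presence of a suitable \emph{normal} $p$-subgroup (the paper itself only uses \cite[Theorem 1]{Weiss88} when $G$ is a $2$-group). For a non-normal abelian Sylow $2$-subgroup there is no known $2$-adic conjugacy statement; whether $2$-subgroups of $\mathrm{V}(\mathbb{Z}G)$ can be conjugated into a Sylow $2$-subgroup is precisely the open ``Sylow-like'' problem that \cite{KimmerleSylow} addresses, and Burnside's control of fusion in an abelian Sylow subgroup is a statement about $G$-conjugacy, not about units of $\mathbb{Z}_2G$. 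Note also that the conclusion cannot be recovered from the soft invariants your quaternion argument relies on: an abelian $2$-group $U$ with $\exp(U)\mid\exp(P)$ and $|U|\mid|P|$ need not embed into an abelian $P$ (compare $C_2\times C_2\times C_2$ with $C_4\times C_4$), and one must in addition rule out non-abelian $2$-subgroups such as $Q_8$. The proof of \cite[Theorem 4.1]{KimmerleSylow} therefore requires substantially heavier input (reduction to $O_{2'}(G)=1$ and the structure theory of groups with abelian Sylow $2$-subgroups) than the rigidity principle you gesture at; as written, this half of your proposal does not go through.
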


Groups possessing dihedral or semidihedral Sylow $2$-subgroups were classified in a series of articles \cite{GorensteinWalter} (see also \cite{Bender} for an alternative proof) and \cite{AlperinBrauerGorenstein}. These will be the basis for the rest of the proofs. We will denote by $O_{2'}(G)$ the biggest normal subgroup of odd order of a group $G.$ The following Lemma follows from \cite[Lemma 2.1, Theorem 2.2]{DokuchaevJuriaans}. 
 
\begin{lemma}\label{Normalteiler}
Let $G$ be a finite group, $N$ a normal subgroup of $G$ and $U$ a finite subgroup of $\mathrm{V}(\mathbb{Z}G)$ such that the orders of $N$ and $U$ are coprime. Then $U$ is rationally conjugate to a subgroup of $G$ if and only if the image of $U$ in $\mathrm{V}(\mathbb{Z}(G/N))$ is rationally conjugate a subgroup of $G/N$. Moreover $\mathrm{V}(\mathbb{Z}(G/N))$ possesses a subgroup isomorphic to $U.$
\end{lemma}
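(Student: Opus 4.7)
The plan is to work with the canonical ring surjection $\pi \colon \mathbb{Z}G \to \mathbb{Z}(G/N)$, extended to $\mathbb{Q}G \to \mathbb{Q}(G/N)$, together with the induced map on normalized units. The first step establishes injectivity of $\pi|_U$. If $u \in U$ satisfies $\pi(u) = 1$, then $u$ is a torsion unit in the kernel of $\mathrm{V}(\mathbb{Z}G) \to \mathrm{V}(\mathbb{Z}(G/N))$, and Lemma 2.1 of Dokuchaev-Juriaans forces its order to involve only primes dividing $|N|$; coprimality of $|U|$ and $|N|$ then implies $u = 1$. Hence $\pi(U) \cong U$ sits inside $\mathrm{V}(\mathbb{Z}(G/N))$, which proves the ``moreover'' assertion.

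For the forward direction of the equivalence, applying $\pi$ to an equation $x^{-1}Ux = P$ with $x$ a unit in $\mathbb{Q}G$ and $P \leq G$ yields $\pi(x)^{-1}\pi(U)\pi(x) = \pi(P)$, and since $|P| = |U|$ is coprime to $|N|$ we have $P \cap N = 1$, so $\pi(P) \leq G/N$ has the same order as $P$, hence the same order as $\pi(U)$. For the reverse direction, suppose $\pi(U)$ is rationally conjugate to a subgroup $\bar P \leq G/N$. I would first apply Schur-Zassenhaus to the preimage of $\bar P$ in $G$ to produce a complement $P \leq G$ to $N$ with $\pi(P) = \bar P$ and $P \cong \bar P$; this uses precisely that $|\bar P| = |U|$ is coprime to $|N|$. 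It remains to promote the rational conjugacy of $\pi(U)$ and $\pi(P)$ in $\mathbb{Q}(G/N)$ to a rational conjugacy of $U$ and $P$ in $\mathbb{Q}G$, which is Theorem 2.2 of Dokuchaev-Juriaans.

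The main obstacle is this last lifting step, for which the natural tool is the central idempotent $\hat N = |N|^{-1}\sum_{n \in N} n \in \mathbb{Q}G$ and the decomposition $\mathbb{Q}G = \hat N\mathbb{Q}G \oplus (1 - \hat N)\mathbb{Q}G$, in which the first summand is canonically isomorphic to $\mathbb{Q}(G/N)$ and realises $\pi$. The given conjugator in $\mathbb{Q}(G/N)$ lifts directly to a unit in $\hat N\mathbb{Q}G$; the work is in producing a compatible unit in $(1 - \hat N)\mathbb{Q}G$ intertwining the restrictions of $U$ and $P$. This is possible because both $U$ and $P$ are groups of order coprime to $|N|$ that are abstractly isomorphic (via the injectivity of $\pi$ on $P$ and $U$, combined with the conjugacy of their images), so their embeddings in $(1-\hat N)\mathbb{Q}G$ afford $\mathbb{Q}$-representations of the common group $U \cong P$ whose characters can be compared via the already-established conjugacy after projection to $\hat N \mathbb{Q}G$, yielding an intertwining unit that, paired with the lift from the first summand, furnishes the required conjugator in $\mathbb{Q}G$.
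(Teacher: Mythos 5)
Your overall skeleton coincides with what the paper actually does: the paper proves this lemma purely by citation to Dokuchaev--Juriaans, using their Lemma~2.1 for the injectivity of $U\to \mathrm{V}(\mathbb{Z}(G/N))$ (hence the ``moreover'' clause) and their Theorem~2.2 for the two-way transfer of rational conjugacy. Your treatment of the routine parts is correct: the kernel-torsion argument for injectivity, applying $\pi$ to $x^{-1}Ux=P$ for the forward implication (with $P\cap N=1$ guaranteeing $\pi(P)\cong P$), and Schur--Zassenhaus to produce a genuine complement $P\leq G$ above $\bar P$. As long as the lifting step is taken from Theorem~2.2 of Dokuchaev--Juriaans as a black box, your proof is complete and is essentially the paper's proof.

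However, the final paragraph of your proposal, where you sketch \emph{why} that lifting step works, contains a genuine gap, and you should not regard it as a proof of the cited theorem. In the decomposition $\mathbb{Q}G=\hat N\mathbb{Q}G\oplus(1-\hat N)\mathbb{Q}G$, the Wedderburn components of $(1-\hat N)\mathbb{Q}G$ correspond exactly to the irreducible characters $\chi$ of $G$ with $N\not\leq\ker\chi$, i.e.\ the characters that are annihilated by the projection realising $\pi$. The rational conjugacy of $\pi(U)$ and $\pi(P)$ in $\mathbb{Q}(G/N)\cong\hat N\mathbb{Q}G$ gives you $\chi(u)=\chi(\phi(u))$ only for the characters with $N\leq\ker\chi$; it is a non sequitur to say the characters of $U$ and $P$ on $(1-\hat N)\mathbb{Q}G$ ``can be compared via the already-established conjugacy after projection to $\hat N\mathbb{Q}G$'', since that projection kills precisely the component you need to control. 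An abstract isomorphism $U\cong P$ together with conjugacy of the images in the quotient does not formally determine the character values of $U$ on the faithful part of $\mathbb{Q}G$, and producing the intertwining unit there is the entire content of Dokuchaev--Juriaans' Theorem~2.2; their argument uses the coprimality of $|U|$ and $|N|$ in an essential way (via partial augmentations and the structure of the kernel $1+\Delta(G,N)$), whereas in your sketch coprimality enters only to get $U\cong P$, which is far too weak. So: cite the theorem and stop, or supply its actual proof; the idempotent decomposition alone will not close the argument.
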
 
 
In the proofs of the main results we will need a lot of information about finite classical groups. We are not going to cite the original literature here of Dickson, Schur, Dieudonn\'e, Brauer, just to mention a few, but literature which is easier accessible and better to understand nowadays. We will also almost not use any character tables accessible in the literature, but natural representations of the groups involved.
 
\section{Proof of main results} 
 
The first Lemma concerns a big class of the groups involved in the results. 
 
\begin{lemma}\label{PSL}
Let $G$ be a subgroup of ${\operatorname{P\Gamma L}}(2,q) = {\rm{Aut}}(\operatorname{PSL}(2,q))$ containing $\operatorname{PSL}(2,q)$ or $\operatorname{PGL}(2,q)$ as a normal subgroup of odd index, where $q$ is an odd prime power. Then units of order $2$ and $4$ in $\mathrm{V}(\mathbb{Z}G)$ are rationally conjugate to elements of $G$. In particular squares of units of order $4$ from $\mathrm{V}(\mathbb{Z}G)$ are rationally conjugate to involutions in $\operatorname{PSL}(2,q)$.
\end{lemma}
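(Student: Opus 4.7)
The plan is to combine the HeLP method with a reduction via the quotient $\pi \colon G \twoheadrightarrow \bar G := G/\operatorname{PSL}(2,q)$. Writing $N$ for the normal subgroup $\operatorname{PSL}(2,q)$ or $\operatorname{PGL}(2,q)$ of $G$, the hypothesis that $[G:N]$ is odd forces every $2$-power-order element of $G$ to lie in $N$, so by Hertweck's vanishing theorem on partial augmentations the partial augmentations of any $2$-power-order unit $u \in \mathrm{V}(\mathbb{Z}G)$ are supported on $G$-classes of elements of $N$. From the classical structure of $\operatorname{PSL}(2,q)$ and $\operatorname{PGL}(2,q)$ (see e.g.\ Huppert, \emph{Endliche Gruppen I}) there is a single $G$-class of involutions in $\operatorname{PSL}(2,q)$, and if $N = \operatorname{PGL}(2,q)$ one additional $G$-class of involutions lying outside $\operatorname{PSL}(2,q)$.

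For units $u$ of order $2$: since $\bar G$ has $2$-part $C_1$ or $C_2$, the Cohn--Livingstone theorem forces $\bar u := \pi(u)$ to have order at most $2$, and its partial augmentations are concentrated either on the trivial class or on the unique involution class of $\bar G$ (all Sylow $2$-subgroups of $\bar G$ are conjugate cyclic groups of order $\leq 2$). Pulling back along $\pi$, exactly one partial augmentation of $u$ on the (at most two) involution classes of $G$ equals $1$ and the rest vanish, so the criterion of Marciniak--Ritter--Sehgal--Weiss yields rational conjugacy to an involution of $G$.

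For units $u$ of order $4$: the relation $\pi(u^2) = \pi(u)^2 = 1$ confines the partial augmentations of $u^2$ to $\operatorname{PSL}(2,q)$-classes, and applied to the order-$2$ unit $u^2$ this identifies $u^2$ as rationally conjugate to the unique involution of $\operatorname{PSL}(2,q)$, giving the ``in particular'' clause. For $u$ itself, I would apply Lemma~\ref{ModHeLP} with the natural permutation character of $G$ on $\mathbb{P}^1(\mathbb{F}_q)$ (of degree $q+1$) and, if needed, the Steinberg character of $\operatorname{PSL}(2,q)$ extended to $G$; the resulting linear system on the $\varepsilon_x(u)$ (via part (a)) together with the non-negativity inequalities (via part (b)), the augmentation sum $\sum_x \varepsilon_x(u) = 1$, and the already-determined partial augmentations of $u^2$, should force every $\varepsilon_x(u) \geq 0$, after which Marciniak--Ritter--Sehgal--Weiss concludes.

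The hardest step will be the HeLP analysis for order-$4$ units, requiring a case split on $q \bmod 8$ (controlling whether $\operatorname{PSL}(2,q)$ itself contains order-$4$ elements) and on whether $N = \operatorname{PSL}(2,q)$ or $\operatorname{PGL}(2,q)$, along with explicit character values on the order-$2$ and order-$4$ classes read off from the natural matrix representation. A simplifying observation that should streamline the case analysis: for $q \equiv \pm 3 \pmod 8$ the group $\operatorname{PSL}(2,q)$ has Klein four Sylow $2$-subgroups and so contains no element of order $4$; hence the existence of a unit of order $4$ in $\mathrm{V}(\mathbb{Z}G)$ forces $N = \operatorname{PGL}(2,q)$ and $\bar u \neq 1$, confining the partial augmentations of $u$ to $G$-classes of order $4$ lying in $\operatorname{PGL}(2,q) \setminus \operatorname{PSL}(2,q)$.
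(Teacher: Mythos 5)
Your treatment of units of order $2$ and of the ``in particular'' clause is sound and is essentially the paper's argument in different clothing: the paper works with the linear character $\chi$ of $G$ that is trivial on $\operatorname{PSL}(2,q)$ and $-1$ outside, which carries exactly the same information as your quotient map $\pi\colon G \to G/\operatorname{PSL}(2,q)$ combined with Berman--Higman in the (abelian, odd-index-over-$C_2$) quotient. Since all $2$-elements of $G$ lie in $N$ and $\operatorname{PSL}(2,q)$ contains a single class of involutions, $\varepsilon_{2a}(u)=\varepsilon_{\bar 1}(\bar u)\in\{0,1\}$ pins down the order-$2$ case and shows $u^2\sim 2a$ for $u$ of order $4$. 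So far, so good.

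The genuine gap is in the order-$4$ case, and it is not merely that the computation is left as ``should force'': the plan as stated cannot succeed. Consider the spurious partial augmentation vector $(\varepsilon_{2a}(u),\varepsilon_{2b}(u),\varepsilon_{4a}(u))=(1,0,0)$ (or $(0,1,0)$) for a unit $u$ of order $4$ with $u^2\sim 2a$. For any character $\theta$ one then gets $\theta(u)=\theta(u^3)=\theta(2a)$, and the eigenvalue multiplicities $\frac14\bigl(\theta(1)+3\theta(2a)\bigr)$ and $\frac14\bigl(\theta(1)-\theta(2a)\bigr)$ are non-negative integers for the permutation character on $\mathbb{P}^1(\mathbb{F}_q)$, for the Steinberg character, and indeed for the paper's own characters (e.g.\ for $\psi_+$ of degree $3m$ with $\psi_+(2a)=-m$ these are $0$ and $m$). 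So no HeLP inequality or integrality condition of the kind you propose can exclude these solutions; character constraints alone only narrow things down to $\varepsilon_{2a},\varepsilon_{2b},\varepsilon_{4a}\in\{0,1\}$ with sum $1$. The decisive missing ingredient is the Cohn--Livingstone congruence \cite[Theorem 4.1]{CohnLivingstone}, which gives $\varepsilon_{4a}(u)\not\equiv 0 \bmod 2$ and $\varepsilon_{2a}(u)\equiv\varepsilon_{2b}(u)\equiv 0\bmod 2$; only this eliminates the two spurious vectors. The paper combines that congruence with the $3$-dimensional adjoint representation of $\operatorname{PGL}(2,q)$ on trace-zero matrices (a $p$-Brauer character, hence the need for Lemma~\ref{ModHeLP}) and its twist by $\chi$. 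Relatedly, your closing ``simplifying observation'' overclaims: for $q\equiv\pm3\bmod 8$ the condition $\bar u\neq 1$ is not automatic, and even when it holds it does not confine the support to order-$4$ classes, since the involution class $2b$ also lies outside $\operatorname{PSL}(2,q)$; again it is the parity congruence that resolves this.
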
 

\begin{proof}
The following facts are given in \cite[Lemma 3.1]{GorensteinWalter}. The Sylow $2$-subgroup of $G$ is a dihedral group or an elementary abelian-group of rank $2$. The group $\operatorname{PSL}(2,q)$ possesses exactly one conjugacy class of involutions, say $2a$, while the group $\operatorname{PGL}(2,q)$ possesses exactly two conjugacy classes of involutions, the one not lying in $\operatorname{PSL}(2,q)$ will be called $2b.$ If $\operatorname{PGL}(2,q)$ is not a normal subgroup of $G$, the conjugacy class $2b$ can be ignored in the following computations. If $G$ possesses no elements of order $4$, then $\operatorname{PGL}(2,q)$ is not a subgroup of $G$ and there exists exactly one conjugacy class of involutions in $G$. Since for $u \in \mathrm{V}(\mathbb{Z}G)$ of order $2$ and $x \in G$ of order different from $2$ we have $\varepsilon_x(u) = 0$, this immediately yields the desired result. So assume that $G$ contains elements of order $4$ and denote by $4a$ the conjugacy class of these elements. There is exactly one such class, since this is the case in a dihedral group. The class $4a$ may lie in $\operatorname{PSL}(2,q)$ or outside of it and these two cases will be partly separated in the following computations. 

If $\operatorname{PGL}(2,q)$ is a subgroup of $G$, then $G/\operatorname{PSL}(2,q)$ has a Sylow $2$-subgroup of order $2$. Hence it possesses a normal $2$-complement by \cite[7.2.2]{Kurzweil}. Thus the group $G$ maps onto a cyclic group of order $2$ such that $\operatorname{PSL}(2,q)$ is in the kernel of this map and elements of $\operatorname{PGL}(2,q)$ outside of $\operatorname{PSL}(2,q)$ are not. Hence the group $G$ possesses a 1-dimensional representation mapping $\operatorname{PSL}(2,q)$ to 1 and elements of $\operatorname{PGL}(2,q)$ outside of $\operatorname{PSL}(2,q)$ to $-1$. The character corresponding to this representation will be called $\chi.$ If $\operatorname{PGL}(2,q)$ is not a subgroup of $G$, the character $\chi$ can be ignored.

Moreover $\operatorname{GL}(2,q)$ is acting via conjugation on the $2\times 2$-matrices over $\mathbb{F}_q$ with trace $0$. The kernel of this action is exactly the centre of $\operatorname{GL}(2,q)$, thus inducing a 3-dimensional representation of $\operatorname{PGL}(2,q).$ The character corresponding to the induced representation on $G$ and its twist with $\chi$ will be called $\psi_+$ and $\psi_-.$ Note that these characters take only integral values on the classes we are interested in. Let $N$ be the normal subgroup of $G$ which is $\operatorname{PSL}(2,q)$ or $\operatorname{PGL}(2,q)$ such that $[G : N] = m$ is odd. The characters described above take the values given in Table \ref{4aInPSL} and Table \ref{4aNotInPSL} on the classes of interest.

\begin{table}[h]
\centering
\begin{tabular}{r|cccc} 
 & $1a$ & $2a$ & $4a$ & $2b$ \\ \hline  \\[-2ex]
$\chi$ & $ 1 $ & $ 1 $ & $1$ & $-1$ \\ 
 $\psi_+$ & $3m$ & $-m$ & $m$ & $-m$ \\ 
 $\psi_-$ & $3m$ & $-m$ & $m$ & $m$ 
\end{tabular}
 \smallskip
 \caption{Some characters of $G$, if elements of order $4$ are in $\operatorname{PSL}(2,q)$.}\label{4aInPSL}
\end{table}

\begin{table}[h]
\centering
\begin{tabular}{r|cccc} 
 & $1a$ & $2a$ & $4a$ & $2b$ \\ \hline  \\[-2ex]
$\chi$ & $ 1 $ & $ 1 $ & $-1$ & $-1$ \\ 
 $\psi_+$ & $3m$ & $-m$ & $m$ & $-m$ \\ 
 $\psi_-$ & $3m$ & $-m$ & $-m$ & $m$ 
\end{tabular}
 \smallskip
 \caption{Some characters of $G$, if elements of order $4$ are not in $\operatorname{PSL}(2,q)$.}\label{4aNotInPSL}
\end{table}

Let $u \in \mathrm{V}(\mathbb{Z}G)$ be of order $2$. We have $\varepsilon_{2a}(u) + \varepsilon_{2b}(u) = 1.$ Moreover from $\chi$ we obtain 
\[\chi(u) = \varepsilon_{2a}(u) - \varepsilon_{2b}(u) \in \{\pm 1\}.\]

Thus $(\varepsilon_{2a}(u), \varepsilon_{2b}(u)) \in \{(1,0), (0,1)\}$, proving the claim for units of order $2$.

Now let $u \in \mathrm{V}(\mathbb{Z}G)$ be of order $4$. Note that since $\chi(u)$ is integral, $\chi(u) \in \{\pm 1\}$ and thus $\chi(u^2) = 1$ meaning that $u^2$ is rationally conjugate to elements in $2a.$ We have 
\begin{align}\label{A}
\varepsilon_{2a}(u) + \varepsilon_{2b}(u) + \varepsilon_{4a}(u) = 1.
\end{align}
Let $D$ be a representation corresponding to $\psi_+$ or $\psi_-.$ Since $\psi_-(u^2) = \psi_+(u^2) = \psi_+(2a) = -m$ the eigenvalues of $D(u^2)$, with multiplicities, are $2m$ times $-1$ and $m$ times $1$. Thus $2m$ of the eigenvalues of $D(u)$ are $i$ or $-i$, where $i$ denotes a primitive $4$th root of unity in $\mathbb{F}_{q^2}$. But since $\psi_+$ and $\psi_-$ do only take real values, $m$ eigenvalues of $D(u)$ must be $i$ and $m$ must be $-i.$ In particular $-m \leq \psi_{+}(u) \leq m$ and $-m \leq \psi_{-}(u) \leq m.$

Assume firstly that $4a$ is lies in $\operatorname{PSL}(2,q).$ Then using $\chi$ we obtain
\[\chi(u) = \varepsilon_{2a}(u) - \varepsilon_{2b}(u) + \varepsilon_{4a}(u) \in \{ \pm 1\}.\]
Subtracting \eqref{A} this gives $\varepsilon_{2b}(u) \in \{0, 1 \}.$  Using $\psi_+$ and $\psi_-$ we obtain
\[-m \leq -m\varepsilon_{2a}(u) + m\varepsilon_{4a}(u) - m\varepsilon_{2b}(u) \leq m\]
and
\[-m \leq -m\varepsilon_{2a}(u) + m\varepsilon_{4a}(u) + m\varepsilon_{2b}(u) \leq m\]
respectively. Adding or subtracting \eqref{A} gives $\varepsilon_{4a}(u) \in \{0,1\}$ and $\varepsilon_{2a}(u) \in \{0, 1\}$ respectively. Since $\varepsilon_{4a}(u) \not \equiv 0 \mod 2$ and $\varepsilon_{2a}(u) \equiv \varepsilon_{2b}(u) \equiv 0 \mod 2$ by \cite[Theorem 4.1]{CohnLivingstone} this implies the equality $(\varepsilon_{2a}(u), \varepsilon_{2b}(u), \varepsilon_{4a}(u)) = (0,0,1)$.

Now assume $4a$ is not in $\operatorname{PSL}(2,q).$ Doing analogues computations to the above $\chi$ gives $\varepsilon_{2a}(u) \in \{0, 1 \}$, while $\psi_+$ and $\psi_-$ give $\varepsilon_{2b}(u) \in \{0, 1 \}$ and $\varepsilon_{4a}(u) \in \{0, 1\}$ respectively. By \cite[Theorem 4.1]{CohnLivingstone} however $\varepsilon_{4a}(u) \not \equiv 0 \mod 2$ and thus we have $(\varepsilon_{2a}(u), \varepsilon_{2b}(u), \varepsilon_{4a}(u)) = (0,0,1),$ proving that units of order $4$ are rationally conjugate to elements of $G$.
\end{proof} 
 
We are now ready to prove the main results. \\
 
\textit{Proof of Theorem \ref{2Dieder}:} By Lemma \ref{Normalteiler} we may assume that $O_{2'}(G) = 1.$  So by \cite[Theorem of Gorenstein and Walter]{Bender} one of the following three cases may be assumed:
\begin{itemize}
\item[(i)] $G$ is a subgroup of ${\operatorname{P\Gamma L}}(2,q)$ containing $\operatorname{PSL}(2,q)$ or $\operatorname{PGL}(2,q)$ as a normal subgroup of odd index $m$. Here $q$ denotes an odd prime power.
\item[(ii)] $G$ is the alternating group of degree 7.
\item[(iii)] $G$ is a 2-group.
\end{itemize} 
In case (iii) the result follows from \cite[Theorem 1]{Weiss88} and case (ii) has been handled in \cite[Example 7]{HertweckCpCp}. So (i) is the only remaining case and we are exactly in the situation of Lemma \ref{PSL}. We may moreover assume that $G$ contains elements of order $4$, since otherwise the result follows from Proposition \ref{Kimmi}. We will frequently use Lemma \ref{PSL} and the characters from the Tables \ref{4aInPSL} and \ref{4aNotInPSL}.

Assume $\mathrm{V}(\mathbb{Z}G)$ contains a subgroup $U$ isomorphic to $C_4 \times C_2.$ Note that $U$ contains three involutions and four elements of order $4$ and $\psi_+$ takes the value $-m$ on involutions of $U$ while taking the value $m$ on all elements of order $4$ in $U$. Let $\mathbf{1}$ be the trivial character of $U.$ Then by Lemma \ref{ModHeLP} 
\[\frac{1}{8}\sum_{u \in U}\psi_+(u)\mathbf{1}(u^{-1}) = \frac{1}{8}(3m - 3m + 4m) = \frac{m}{2}\]
is a non-negative integer, contradicting the existence of $U$ since $m$ is odd.

Now in view of Lemma \ref{2Gruppen} and the fact that the exponents of $G$ and $\mathrm{V}(\mathbb{Z}G)$ coincide, it only remains to show that there are no elementary-abelian groups of rank bigger then $2$, quaternion or semidihedral groups in $\mathrm{V}(\mathbb{Z}G).$ Since every quaternion and semidihedral group contains a quaternion group of order $8$, it will be enough to prove that $\mathrm{V}(\mathbb{Z}G)$ does not contain neither a subgroup isomorphic to $C_2 \times C_2 \times C_2$ nor $Q_8.$ Let $U$ be an elementary-abelian group of rank $3$. Then $\psi_+(u) = -m$ for every $u \neq 1$ in $U$ and by Lemma \ref{ModHeLP}
\[\frac{1}{8}\sum_{u \in U}\psi_+(u)\mathbf{1}(u^{-1}) = \frac{1}{8}(3m - 7m) = \frac{-m}{2}\]
is a non-negative integer, contradicting the existence of $U.$

Finally let $U \leq \mathrm{V}(\mathbb{Z}G)$ be a quaternion group of order $8$. Note that $U$ contains exactly one involution and six elements of order $4$. Since the involution in $U$ is a square of an element of order $4$ it is rationally conjugate to elements in $2a$. We claim that $4a$ is in $\operatorname{PSL}(2,q)$. Indeed, otherwise 
\[\frac{1}{8}\sum_{u \in U}\chi(u)\mathbf{1}(u^{-1}) = \frac{1}{8}(2 - 6) = \frac{-1}{2},\]
contradicting Lemma \ref{ModHeLP}. So assume that $4a$ lies in $\operatorname{PSL}(2,q).$ Note that since the $2$-Sylow subgroup of $\operatorname{PSL}(2,q)$ is dihedral and $|\operatorname{PSL}(2,q)| = \frac{(q-1)q(q+1)}{2}$ this is equivalent to $q \equiv \pm 1 \mod 8$. Any irreducible ordinary character of degree $q \pm 1$ of $\operatorname{PSL}(2,q)$ may be extended to a character of $\operatorname{PGL}(2,q)$ by \cite[Lemma 4.5]{White}, meaning it has the same values on the elements lying in $\operatorname{PSL}(2,q).$ If $\operatorname{PGL}(2,q)$ is not a normal subgroup of $G$, this fact may just be ignored. In any case every ordinary irreducible character $\eta'$ of degree $q \pm 1$ of $\operatorname{PSL}(2,q)$ can be induced to a character $\eta$ of $G$ such that $\eta(1a) = m\cdot \eta'(1a), \eta(2a) = m\cdot \eta'(2a)$ and $\eta(4a) = m\cdot \eta'(4a).$ Since in $\operatorname{PGL}(2,q)$ every element is conjugate to its inverse the extension of $\eta'$ to $\operatorname{PGL}(2,q)$ is real-valued. Moreover the Schur index of this extension is $1$ by \cite[Theorem 2(a)]{Gow} and hence it is affordable by a real representation and then so is $\eta$.

We will apply the same arguments as in the last paragraph of the proof of \cite[Theorem 2.1]{HertweckHoefertKimmerle}. Let $\varepsilon \in \{ \pm 1 \}$ such that $q \equiv \varepsilon \mod 4$. The ordinary character table of $\operatorname{PSL}(2,q)$ was first computed independently by Schur and Jordan and is also given in \cite[Table 1]{HertweckHoefertKimmerle}. Let $\eta'$ be an irreducible ordinary character of $\operatorname{PSL}(2,q)$ of degree $q + \varepsilon$ such that $\eta'(2a) = -2\varepsilon$ and $\eta'(4a) = 0$. Let $\eta$ be the character of $G$ obtained from $\eta'$ the way described in the preceding paragraph. Then $\eta(1a) = m(q + \varepsilon), \ \eta(2a) = -2m\varepsilon$ and $\eta(4a) = 0$. Let $\lambda$ be the $2$-dimensional irreducible character of $U$, so $\lambda$ takes the value $-2$ on the involution in $U$ and the value $0$ on elements of order $4$. Taking the scalar product of $\eta$ and $\lambda$ with respect to $U$ we obtain
\[\langle \lambda, \eta \rangle_U = \frac{1}{8}(2m(q+\varepsilon) + 4m\varepsilon) = \frac{m(q-\varepsilon)}{4} + m\varepsilon. \] 
Since $q \equiv \pm 1 \mod 8$ the number $\frac{q-\varepsilon}{4}$ is even, thus $\langle \lambda, \eta \rangle_U$ is odd. By the preceding paragraph $\eta$ is affordable by a real representation. Linearly extending this representation we obtain a real representation of $U$ with character $\eta|_U$. Since the Frobenius-Schur indicator of $\lambda$ is $-1$ it is not affordable by a real representation \cite[XI, Theorem 8.3]{HuppertIII}. So by \cite[XI, Theorem 8.9]{HuppertIII} the integer $ \langle \lambda, \eta \rangle_U$ must be even, a contradiction. Thus $\mathrm{V}(\mathbb{Z}G)$ does not contain a quaternion group of order $8$ and this finishes the proof of the theorem.  \hfill $\qed$ \\

\textit{Proof of Proposition \ref{Ord8}:} By \cite[Proposition 4.7]{KimmerleSylow} we may assume that the Sylow $2$-subgroup of $G$ is not abelian, thus it is a dihedral or quaternion group of order $8$. First assume that the Sylow $2$-subgroup is dihedral. By Lemma \ref{Normalteiler} we may assume that $O_{2'}(G) = 1$ and so again by \cite[Theorem of Gorenstein and Walter]{Bender} one of the three cases given in the beginning of the proof of Theorem \ref{2Dieder} may be assumed. If $G$ is a $2$-group the result is well known and follows also from \cite[Theorem 1]{Weiss88}. Since by assumption $G$ is not isomorphic to an alternating group of degree $7$, it is a subgroup of ${\operatorname{P\Gamma L}}(2,q)$ containing $\operatorname{PSL}(2,q)$ or $\operatorname{PGL}(2,q)$ as a normal subgroup of odd index with some odd prime power $q.$ Let $U$ be a $2$-subgroup of $\mathrm{V}(\mathbb{Z}G)$. Then $U$ is isomorphic to a subgroup of $G$ by Theorem \ref{2Dieder}. Let the conjugacy classes of elements of order $2$ and $4$ be named as in the proof of Lemma \ref{PSL}. If $\operatorname{PGL}(2,q)$ is not a subgroup of $G$, Lemma \ref{PSL} implies that any isomorphism between $U$ and a subgroup $S$ of $G$ preserves values of all ordinary irreducible characters of $G$ and then $U$ and $S$ are rationally conjugate by \cite[Lemma 4]{Valenti}.

So assume that $\operatorname{PGL}(2,q)$ is a subgroup of $G$. Note that in this case elements of order $4$ in $G$ do not lie in $\operatorname{PSL}(2,q)$ since otherwise the Sylow $2$-subgroup of $G$ would have more then $8$ elements. We thus may assume that $G$ has characters as given in Table \ref{4aNotInPSL}. If $U$ is cyclic, then it is rationally conjugate to a subgroup of $G$ by Lemma \ref{PSL}. If $U$ is elementary-abelian of rank $2$, either all elements of $U$ are rationally conjugate to elements in $2a$ or exactly two elements are rationally rationally conjugate to elements of $2b$, since otherwise $\frac{1}{4}\sum\limits_{u \in U}\chi(u)$ is not a non-negative integer, contradicting Lemma \ref{ModHeLP}. Thus $U$ is rationally conjugate to a subgroup of $G$ by \cite[Lemma 4]{Valenti}, since in both cases we can find a subgroup $S$ of $G$ and an isomorphism between $U$ and $S$ preserving values on all ordinary characters of $G$.

So finally assume that $U$ is isomorphic to a dihedral group of order $8$. The central involution of $U$ is rationally conjugate to elements in $2a$ by Lemma \ref{PSL} since it is a square of an unit of order $4$. Since $\frac{1}{8}\sum\limits_{u \in U}\chi(u)$ is an integer, one of the non-central involutions in $U$ is rationally conjugate to elements of $2b$ and the character values of $\chi$ then determine for every involution in $U$, whether it is rationally conjugate to an element of $2a$ or $2b$. This allows again to construct an isomorphism between $U$ and a Sylow $2$-subgroup of $G$ preserving character values and thus $U$ is rationally conjugate to Sylow $2$-subgroups of $G$ by \cite[Lemma 4]{Valenti}.

It remains to consider the case that the Sylow $2$-subgroup of $G$ is an quaternion group of order $8$. By \cite[Proposition 4.5]{KimmerleSylow} it is enough to show that units of order $2$ and $4$ are rationally conjugate to elements of $G$. By the famous Theorem of Brauer and Suzuki \cite{BrauerSuzuki}, $G$ contains a central involution, say $g$, and by the Berman-Higman Theorem $g$ is the only involution in $\mathrm{V}(\mathbb{Z}G)$. So assume $u \in \mathrm{V}(\mathbb{Z}G)$ has order $4$. Then $\varepsilon_g(u) = 0$ by the Berman-Higman Theorem. If $G$ contains only one conjugacy class of elements of order $4$, this implies that $u$ is rationally conjugate to an element of $G$. So assume that $G$ has more then one conjugacy class of elements of order $4$ and that $u$ is not rationally conjugate to an element of $G$. Different conjugacy classes of elements of order $4$ in $G$ map to different conjugacy classes of involutions in $G/\langle g \rangle$. Thus, if $u$ has non-trivial partial augmentations, the image of $u$ in $\mathrm{V}(\mathbb{Z}(G/\langle g \rangle))$ is an involution with non-trivial partial augmentations. Such an involution does not exist by \cite[Proposition 3.4]{AndreasKimmi} and hence $u$ is rationally conjugate to an element of $G$ and the Proposition is proved. \hfill $\qed$

\begin{rem}
As already remarked in \cite[Example 7]{HertweckCpCp} it is not known, whether units of order $4$ in $\mathrm{V}(\mathbb{Z}A_7)$ are rationally conjugate to elements of the group base. If one could prove that this is the case, then the exception of $A_7$ from Proposition \ref{Ord8} would not be necessary.
\end{rem}

\textit{Proof of Theorem \ref{SIPC2C4}:} Let $G$ be a group not containing a subgroup isomorphic to $C_4 \times C_2$ such that $\mathrm{V}(\mathbb{Z}G)$ contains a subgroup $U = \langle t \rangle \times \langle s \rangle \cong C_4 \times C_2.$ By Lemma \ref{2Gruppen} we may assume that the Sylow $2$-subgroup $P$ of $G$ is abelian, (generalized) quaternion, dihedral or semidihedral. By Proposition \ref{Kimmi} and Theorem \ref{2Dieder} only the case where $P$ is semidihedral remains and by Lemma \ref{Normalteiler} we may assume $O_{2'}(G) = 1.$ The groups of interest are classified in \cite{AlperinBrauerGorenstein}, but are not given in a single theorem there. By \cite[II, 1., Proposition 1]{AlperinBrauerGorenstein} four different cases may appear. These cases get their names in \cite[II, 2., Definition 1]{AlperinBrauerGorenstein}. So $G$ is one of the following:
\begin{itemize}
\item[(i)] A $QD$-group. $G$ possesses exactly one conjugacy class of involutions and one conjugacy class of elements of order $4$.
\item[(ii)] A $Q$-group. $G$ possesses exactly two conjugacy classes of involutions and one conjugacy class of elements of order $4$.
\item[(iii)] A $D$-group. $G$ possesses exactly one conjugacy class of involutions and two conjugacy classes of elements of order $4$.
\item[(iv)] A 2-group.
\end{itemize}
In case (iv) \cite[Theorem 1]{Weiss88} gives the result, so we have to consider the first three cases.\\

\textit{Case 1:} $G$ is a $QD$-group. Denote by $2a$ the conjugacy class of involutions in $G$ and by $4a$ the class of elements of order $4$.

By \cite[II, 2., Proposition 2]{AlperinBrauerGorenstein}, $G$ possesses a simple normal subgroup $N$ of odd index $m$ and by \cite[Third Main Theorem]{AlperinBrauerGorenstein} the normal subgroup $N$ is either $\operatorname{PSL}(3,q)$ with $q \equiv -1 \mod 4$ or $\operatorname{PSU}(3,q)$ with $q \equiv 1 \mod 4$, for some prime power $q$, or the Mathieu group $M_{11}$. Note that $\operatorname{PSU}(3,q)$ is actually defined over $\mathbb{F}_{q^2}$, the field with $q^2$ elements. The groups $\operatorname{SL}(3,q)$ and $\operatorname{SU}(3,q)$ act on the homogeneous polynomials of degree 3 in three commuting variables $x, y, z$ over the field $\mathbb{F}_q$ and $\mathbb{F}_{q^2}$ respectively. See e.g. \cite[pp. 14 - 16]{AlperinLocal} for a discussion of the analogues action of $\operatorname{SL}(2,q).$ The centres of the groups are in the kernel of the action and thus this action supplies a $10$-dimensional representation of $\operatorname{PSL}(3,q)$ and $\operatorname{PSU}(3,q)$ respectively. Let $\chi'$ be the character of this representation. Elements of $\operatorname{SL}(3,q)$ and $\operatorname{SU}(3,q)$ projecting to elements in the classes $2a$ and $4a$ in $G$ are given by $A = \left(\begin{smallmatrix} 1 & 0 & 0 \\ 0 & -1 & 0 \\ 0 & 0 & -1 \end{smallmatrix}\right)$ and $B = \left(\begin{smallmatrix} 1 & 0 & 0 \\ 0 & 0 & 1 \\ 0 & -1 & 0 \end{smallmatrix}\right)$ where the Gram matrix of the underlying unitary form is taken to be the identity matrix. I.e. we understand $\operatorname{SU}(3,q)$ to be those matrices of $\operatorname{GL}(3,q^2)$ having determinant $1$ and leaving the unitary form
\[\alpha: \mathbb{F}_{q^2}^3 \times \mathbb{F}_{q^2}^3 \rightarrow \mathbb{F}_{q^2}, \ \ (x,y) \mapsto \sigma(x)^t \left(\begin{smallmatrix}1 & 0 & 0 \\ 0 & 1 & 0 \\ 0 & 0 & 1 \end{smallmatrix} \right)y \]
invariant. Here $\sigma$ denotes the automorphism of $\mathbb{F}_{q^2}$ of order $2$. 

The eigenspace of $A$ for the eigenvalue $1$ is spanned by $x^3,xy^2,xz^2,xyz$ while the eigenspace for the eigenvalue $-1$ is spanned by $y^3,z^3,x^2y,x^2z,y^2z,yz^2.$ Denote by $i$ a primitive 4th root of unity in $\mathbb{F}_{q^2}.$ The eigenspaces of $B$ are spanned by:
\begin{itemize}
\item For the eigenvalue $i$ by $y^3 + iz^3, yz^2 + iy^2z, x^2y + ix^2z$,
\item for the eigenvalue $-i$ by $y^3 - iz^3, yz^2 - iy^2z, x^2y - ix^2z$,
\item for the eigenvalue $-1$ by $xy^2 - xz^2, xyz$,
\item for the eigenvalue $1$ by $x^3, xy^2 + xz^2$.
\end{itemize} 
One thus obtains $\chi'(1) = 10$, $\chi'(2a) = -2$ and $\chi'(4a) = 0.$ The Mathieu group $M_{11}$ does also posses an (ordinary) character with the same values, see e.g. \cite{Atlas}. Inducing this character of $N$ to $G$ we get the character given in Table \ref{CharQD}.

\begin{table}[h]
\centering
\begin{tabular}{r|ccc} 
 & $1a$ & $2a$ & $4a$ \\ \hline  \\[-2ex]
$\chi$ & $ 10m $ & $ -2m $ & $0$
\end{tabular}
 \smallskip
 \caption{A character of a $QD$-group $G$.}\label{CharQD}
\end{table}

Unlike in Lemma \ref{PSL} it is not possible to show using only the HeLP-method that units of order $4$ in $\mathrm{V}(\mathbb{Z}G)$ are rationally conjugate to elements of $G$. This may be checked e.g. using a GAP-package implementing the method \cite{HeLPPaper}. See also \cite{KonovalovM11} concerning $M_{11}$. Let $u \in \mathrm{V}(\mathbb{Z}G)$ be of order $4$. 
If $D$ is a representation affording $\chi$, then $\chi(u)$ is integral and the eigenvalues of $D(u)$ are $4$th roots of unity and thus $\chi(u) = \chi(u^3).$ Recall that $C_4 \times C_2 \cong U = \langle t \rangle \times \langle s \rangle$ is the group we want to study.  There are three involutions in $U$ taking the value $-2m$ for $\chi.$ Moreover the elements of order $4$ in $U$ are $t, t^3, st, st^3$ and for them we have $\chi(t) = \chi(t^3)$ and $\chi(st) = \chi(st^3)$. So by Lemma \ref{ModHeLP}
\[\frac{1}{8} \sum_{u \in U} \chi(u) = \frac{1}{8} \left(10m -6m + 2\chi(t) + 2\chi(st)\right)\]
 is a non-negative integer. Note that since $\chi(4a) = 0$ we have $\chi(t) = -2m\varepsilon_{2a}(t)$ and $\chi(st) = -2m\varepsilon_{2a}(st)$. Since $\varepsilon_{2a}(t) \equiv \varepsilon_{2a}(st) \equiv 0 \mod 2$ by \cite[Theorem 4.1]{CohnLivingstone}, this implies $\chi(t) \equiv \chi(st) \equiv 0 \mod 4$. Thus $2\chi(t) \equiv 2\chi(st) \equiv 0 \mod 8$, implying $8 \mid 4m$, a contradiction.\\

\textit{Case 2:} $G$ is a $Q$-group. Denote by $2a$ and $2b$ the conjugacy classes of involutions in $G$ and by $4a$ the class of elements of order $4$. 

By \cite[II, 3., Proposition 2]{AlperinBrauerGorenstein}, $G$ possesses a normal subgroup isomorphic to $\operatorname{SL}(2,q)$ for some odd prime power $q$ and hence by \cite[II, 3., Proposition 3]{AlperinBrauerGorenstein} $G$ possesses a normal subgroup $N$ isomorphic to a group denoted by Alperin, Brauer and Gorenstein as $\operatorname{SL}_k(2,q)$ or $\operatorname{SU}_k(2,q)$ and $[G : N] =m$ is odd. The groups appearing for $N$ are defined in \cite[p. 17 above Lemma 1]{AlperinBrauerGorenstein}. The only relevant fact for us will be that these are subgroups of $\operatorname{GL}(2,q)$ and $\operatorname{GU}(2,q)$ respectively and that they contain all matrices of these groups having determinant $1$ or $-1.$ Representatives of the conjugacy classes of interest in $N$ are given by $\left(\begin{smallmatrix} -1 & 0 \\  0 & -1 \end{smallmatrix}\right)$, $\left(\begin{smallmatrix} 1 & 0 \\  0 & -1 \end{smallmatrix}\right)$ and $\left(\begin{smallmatrix} 0 & -1 \\ 1 & 0 \end{smallmatrix}\right)$ where again the Gram matrix of the underlying unitary form is taken to be the identity matrix. Inducing the character of $N$ given by the determinant map to a character of $G$ turns out to be already enough. The values of this character $\chi$ are given in Table \ref{CharQ}.   

\begin{table}[h]
\centering
\begin{tabular}{r|cccc} 
 & $1a$ & $2a$ & $2b$ & $4a$ \\ \hline  \\[-2ex]
$\chi$ & $ m $ & $ m $ & $-m$ & $m$ 
\end{tabular}
 \smallskip
 \caption{A character of a $Q$-group $G.$\label{CharQ}}
\end{table}

\textit{Claim:} Elements of order $2$ and $4$ in $\mathrm{V}(\mathbb{Z}G)$ are rationally conjugate to elements of $G.$ Hence if $u \in \mathrm{V}(\mathbb{Z}G)$ is of order $4$, then $u^2$ is rationally conjugate to elements in $2a.$

Let first $u \in \mathrm{V}(\mathbb{Z}G)$ be of order $2$. If $u$ is not in $2a$, then $\varepsilon_{2a}(u) = 0$ by the Berman-Higman Theorem, since $2a$ consists of a single element which is central in $G$. Thus $u$ is rationally conjugate to an element of $2b$ in this case. If $u$ is of order $4$, again $\varepsilon_{2a}(u) = 0.$ So from $\varepsilon_{2b}(u) + \varepsilon_{4a}(u) = 1$ and the inequalities
\[-m \leq -m\varepsilon_{2b}(u) + m\varepsilon_{4a}(u) \leq m\]
coming from $\chi$ together with the fact that $\varepsilon_{4a} \not \equiv 0 \mod 2$ \cite[Theorem 4.1]{CohnLivingstone} we conclude $(\varepsilon_{2a}(u), \varepsilon_{2b}(u), \varepsilon_{4a}(u)) = (0,0,1).$ 
 
Now let again $C_4 \times C_2 \cong U = \langle t \rangle \times \langle s \rangle$ be a subgroup of $\mathrm{V}(\mathbb{Z}G).$ As $t^2$ is a square of an element of order $4$ it must be the element of $2a$ by the above claim. The other involutions in $U$ must thus be rational conjugates of elements in $2b$ since the element of $2a$, being central, has no conjugates. Then
\[\frac{1}{8}\sum_{u \in U} \chi(u) = \frac{1}{8}(6m-2m) = \frac{m}{2}\]
is a non-negative integer by Lemma \ref{ModHeLP}, a contradiction.\\

\textit{Case 3:} $G$ is a $D$-group. Denote by $2a$ the conjugacy class of involutions in $G$ and by $4a$ and $4b$ the classes of elements of order $4$.

By \cite[II, 3., Proposition 4]{AlperinBrauerGorenstein} we may assume that $G$ is a $\operatorname{PGL}_n(2,3)$, defined in \cite[p. 20, before Lemma 5]{AlperinBrauerGorenstein}, or $A_7$ or it is a subgroup of $\operatorname{P\Gamma L}(2,q) = {\rm{Aut}}(\operatorname{PSL}(2,q))$, for some prime power $q$, and contains one of the groups $\operatorname{PSL}(2,q)$, $\operatorname{PGL}(2,q)$ or $\operatorname{PGL}^*(2,q)$ as a normal subgroup $N$ such that $[G : N] = m$ and $q$ are both odd. In case $G$ is a $\operatorname{PGL}_n(2,3)$ its Sylow $2$-subgroup is not a semidihedral group by \cite[II, 2., Lemma 5]{AlperinBrauerGorenstein}. In case $G$ is $A_7$ or $N$ is $\operatorname{PSL}(2,q)$ or $\operatorname{PGL}(2,q)$, the Sylow $2$-subgroup is also not semidihedral by \cite[II, 2., Lemma 3]{AlperinBrauerGorenstein}. So only the case $N = \operatorname{PGL}^*(2,q)$ remains, this group is defined in \cite[p. 20, above Lemma 4]{AlperinBrauerGorenstein}, and we are going to describe it now.

The group $N = \operatorname{PGL}^*(2,q)$ is a non-split extension of $\operatorname{PSL}(2,q)$ of degree $2$ and does only exist, if $q$ is a square, say $q = r^2.$ The groups $\operatorname{PGL}^*(2,q)$ are Zassenhaus groups and were introduced, to my knowledge, in \cite{ZassenhausKennzeichnung} where they are denoted by $M_q.$ They are also discussed in \cite[XI, Example 1.3c)]{HuppertIII}. Let $\sigma$ be the unique automorphism of $\mathbb{F}_q$ of order $2$. So in particular, the elements of $\mathbb{F}_r$ are fixed by $\sigma.$ By acting entrywise on $\operatorname{GL}(2,q)$ the automorphism $\sigma$ induces an automorphism, also called $\sigma$, of $\operatorname{GL}(2,q)$ and after projection an automorphism $\bar{\sigma}$ of $\operatorname{PSL}(2,q).$ Denote by \ $\bar{}$ \ the natural homomorphism from $\operatorname{GL}(2,q)$ to $\operatorname{PGL}(2,q).$ Then via conjugation every $A \in \operatorname{GL}(2,q)$ induces an automorphism $\bar{A}$ of $\operatorname{PSL}(2,q).$ We understand $\operatorname{PSL}(2,q)$ to be its own group of inner automorphisms. Let $\alpha$ be an element of maximal $2$-power order in $\mathbb{F}_q$ and $i$ a primitive $4$th root of unity in $\mathbb{F}_q$. Set 
\[g = \left(\begin{smallmatrix} 1 & 0 \\ 0 & \alpha \end{smallmatrix}\right), \ \ \operatorname{if} \ \ r \equiv -1 \mod 4\]
and 
\[g = \left(\begin{smallmatrix} 0 & \alpha \\ -1 & 0 \end{smallmatrix}\right), \ \ \operatorname{if} \ \ r \equiv 1 \mod 4. \] 
Note that $\sigma(\alpha) = -\alpha^{-1}$, if $r \equiv -1 \mod 4$, and $\sigma(\alpha) = -\alpha$, if $r \equiv 1 \mod 4.$ Then $\bar{g}\bar{\sigma}$ is an automorphism of order $4$ of $\operatorname{PSL}(2,q)$ such that $(\bar{g}\bar{\sigma})^2 = \overline{\left(\begin{smallmatrix} i & 0 \\ 0 & -i \end{smallmatrix}\right)}$ is an inner automorphism of $\operatorname{PSL}(2,q).$ We set $\operatorname{PGL}^*(2,q) = \langle \operatorname{PSL}(2,q), \bar{g}\bar{\sigma} \rangle$, understanding it as a subgroup of the automorphism group of $\operatorname{PSL}(2,q).$ The concrete choice of $g$ is inspired by the proof of \cite[Lemma 2.3]{GorensteinCentralizers} where the structure of the Sylow $2$-subgroup of $\operatorname{PGL}^*(2,q)$ is analysed. 

Since $G/\operatorname{PSL}(2,q)$ has a Sylow $2$-subgroup of order $2$ it contains a normal $2$-complement by \cite[7.2.2]{Kurzweil}. Thus $G$ maps onto a cyclic group of order $2$ and $\operatorname{PSL}(2,q)$ is in the kernel of this map while elements of $N$ outside of $\operatorname{PSL}(2,q)$ are not. Hence $G$ has a 1-dimensional representation containing $\operatorname{PSL}(2,q)$ in its kernel and mapping elements of $N$ outside of $\operatorname{PSL}(2,q)$ to $-1$. Call the corresponding character $\chi.$ Moreover $\operatorname{SL}(2,q)$ is acting via conjugation on the $2\times 2$-matrices over $\mathbb{F}_q$ having trace $0$ giving a $3$-dimensional representation of $\operatorname{PSL}(2,q).$ This then induces a $6m$-dimensional representation of $G.$ Call the corresponding character $\psi.$ 

Furthermore $\operatorname{GL}(2,q)$ is acting on the 4-dimensional $\mathbb{F}_r$-vector space 
\[\mathcal{H} = \left\{ \left(\begin{smallmatrix} a & c \\ \sigma(c) & b \end{smallmatrix}\right) \ \mid \ a,b \in \mathbb{F}_{r}, c \in \mathbb{F}_q \right\}\]
by $A * H = \sigma(A)^t H A$ for $A \in \operatorname{GL}(2,q)$ and $H \in \mathcal{H}$, where $X^t$ denotes the transpose of a matrix $X.$ The kernel of this operation is $\left\{ \left(\begin{smallmatrix} x & 0 \\ 0 & x \end{smallmatrix}\right) \ | \ x \in \mathbb{F}_q, x\sigma(x) = 1 \right\}.$ Since the centre of $\operatorname{SL}(2,q)$ is contained in this kernel, we obtain a 4-dimensional $\mathbb{F}_r$-representation of $\operatorname{PSL}(2,q)$ which we want to extend to a representation of $N.$ Also $\sigma$ is acting on $\mathcal{H}$ by entry-wise application. Let $H = \left(\begin{smallmatrix} a & c \\ \sigma(c) & b \end{smallmatrix}\right)$ be an arbitrary element of $\mathcal{H}.$ In case $r \equiv -1 \mod 4$ the element $g \sigma$, as an element of the semilinearities $\Gamma\operatorname{L}(2,q)$, has order $4$ and the action of $\langle g\sigma \rangle$ on $\mathcal{H}$ has trivial kernel. Thus setting 
\[\bar{g}\bar{\sigma} * H = \left(\begin{smallmatrix} a & \alpha\sigma(c) \\ \sigma(\alpha) c  & -b \end{smallmatrix}\right)\]
extends the action of $\operatorname{PSL}(2,q)$ to an action of $N$ providing a $4$-dimensional $\mathbb{F}_r$-representation of $N.$ In case $r \equiv 1 \mod 4$ the order of the semilinearity $g\sigma$ is twice the order of $\alpha$ while the order of $\bar{g}\bar{\sigma}$ is $4$. We have 
\[g\sigma * H = \left(\begin{smallmatrix} b & -\alpha\sigma(c) \\ -\sigma(\alpha)c & -\alpha^2 a \end{smallmatrix}\right)\]
so $(g \sigma)^4 * H = \alpha^4 H$. Hence viewing a basis of $\mathcal{H}$ as a basis of the $4$-dimensional $\mathbb{F}_q$-vector space $\mathbb{F}_q \otimes_{\mathbb{F}_r} \mathcal{H}$ and setting 
\[\bar{g}\bar{\sigma} * H = \alpha^{-1} \left(\begin{smallmatrix} b & -\alpha \sigma(c) \\ -\sigma(\alpha) c & -\alpha^2 a \end{smallmatrix}\right) = \left(\begin{smallmatrix} \alpha^{-1}b & -\sigma(c) \\ c & -\alpha a \end{smallmatrix}\right)\]
we obtain a $4$-dimensional $\mathbb{F}_q$-representation of $N.$ Let $\eta$ be the character corresponding to the induced representation of $G$. Computing the eigenvalues of these actions one obtains the character values given in Table \ref{CharD}.

\begin{table}[h]
\centering
\begin{tabular}{r|cccc} 
 & $1a$ & $2a$ & $4a$ & $4b$ \\ \hline  \\[-2ex]
$\chi$ & $ 1 $ & $ 1 $ & $1$ & $-1$ \\ 
$\psi$ & $ 6m $ & $ -2m $ & $2m$ & $0$ \\
$\eta$ & $ 4m $ & $ 0 $ & $-2m$ & $0$ \\
\end{tabular} 
 \smallskip
 \caption{Some characters of a $D$-group $G.$\label{CharD}}
\end{table} 

\textit{Claim:} Elements of order $2$ and $4$ in $\mathrm{V}(\mathbb{Z}G)$ are rationally conjugate to elements of $G.$ Hence especially an element of order $4$ is rationally conjugate to its inverse. 

This is clear for elements of order $2$, so let $u \in \mathrm{V}(\mathbb{Z}G)$ be of order $4$. From $\chi$ we obtain the equation 
\[\varepsilon_{2a}(u) + \varepsilon_{4a}(u) - \varepsilon_{4b}(u) \in \{ \pm 1 \},\]
and adding $\varepsilon_{2a}(u) + \varepsilon_{4a}(u) + \varepsilon_{4b}(u) = 1$ onto this gives $\varepsilon_{4b}(u) \in \{0,1\}.$ Since $u^2$ is rationally conjugate to an element in $2a$ under a representation affording $\eta$ the unit $u^2$ has $2m$ times the eigenvalue $1$ and $2m$ times the eigenvalue $-1.$ This implies 
\[-2m \leq \eta(u) = -2m\varepsilon_{4a}(u) \leq 2m\]
and thus $\varepsilon_{4a}(u) \in \{-1,0,1\}.$ Under a representation affording $\psi$ the involution $u^2$ has $4m$ times the eigenvalue $-1$ and $2m$ times the eigenvalue $1$. Thus 
\[-2m \leq \psi(u) = -2m\varepsilon_{2a}(u) + 2m\varepsilon_{4a}(u) \leq 2m,\]
implying $-\varepsilon_{2a}(u) + \varepsilon_{4a}(u) \in \{-1,0,1\}.$ Together with the fact that $\varepsilon_{2a}(u) \equiv 0 \mod 2$ and $\varepsilon_{4a}(u) + \varepsilon_{4b}(u) \not \equiv 0 \mod 2$ by \cite[Theorem 4.1]{CohnLivingstone} these equations prove the claim. The fact that $u$ is rationally conjugate to its inverse follows from the character values of $\chi.$

So let $C_4 \times C_2 \cong U = \langle t \rangle \times \langle s \rangle.$ Then by Lemma \ref{ModHeLP}
\[\frac{1}{8}\sum_{u \in U} \chi(u) = \frac{1}{8}(1 + 3 + 2\chi(t) + 2\chi(st))\]
is a non-negative integer. Since $\chi(t), \chi(st) \in \{\pm 1\}$ this implies $\chi(t) = \chi(st)$ and thus $t$ and $st$ are rationally conjugate. Now 
\[\frac{1}{8} \sum_{u \in U} \eta(u) = \frac{1}{8}(4m +3\cdot 0 + 4\eta(t))\]
is also a non-negative integer. Since $\eta$ vanishes on the conjugacy classes $2a$ and $4b$, we obtain that $\eta(t) = \eta(4a)\varepsilon_{4a}(t) = -2m\varepsilon_{4a}(t)$. So $4\eta(t) \equiv 0 \mod 8$ meaning that $\frac{m}{2}$ has to be an integer. This contradicts the existence of $U$ and finalizes the proof of the Theorem. \hfill $\qed$\\

{\textbf{Acknowledgement:} I am thankful to Markus Stroppel for many interesting conversations about actions of classical groups.

\bibliographystyle{amsalpha}
\bibliography{SIPC2C4.bib}

\newcommand{\etalchar}[1]{$^{#1}$}
\providecommand{\bysame}{\leavevmode\hbox to3em{\hrulefill}\thinspace}
\providecommand{\MR}{\relax\ifhmode\unskip\space\fi MR }
\providecommand{\MRhref}[2]{%
  \href{http://www.ams.org/mathscinet-getitem?mr=#1}{#2}
}
\providecommand{\href}[2]{#2}
\begin{thebibliography}{MRSW87}

\bibitem[ABG70]{AlperinBrauerGorenstein}
J.~L. Alperin, R.~Brauer, and D.~Gorenstein, \emph{Finite groups with
  quasi-dihedral and wreathed {S}ylow {$2$}-subgroups.}, Trans. Amer. Math.
  Soc. \textbf{151} (1970), 1--261.

\bibitem[Alp86]{AlperinLocal}
J.~L. Alperin, \emph{Local representation theory}, Cambridge Studies in
  Advanced Mathematics, vol.~11, Cambridge University Press, Cambridge, 1986,
  Modular representations as an introduction to the local representation theory
  of finite groups.

\bibitem[Ari07]{Ari}
\emph{Mini-{W}orkshop: {A}rithmetik von {G}ruppenringen}, Oberwolfach Rep.
  \textbf{4} (2007), no.~4, 3209--3239, Abstracts from the mini-workshop held
  November 25--December 1, 2007, Organized by Eric Jespers, Zbigniew Marciniak,
  Gabriele Nebe and Wolfgang Kimmerle, Oberwolfach Reports. Vol. 4, no. 4.

\bibitem[Ben81]{Bender}
H.~Bender, \emph{Finite groups with dihedral {S}ylow {$2$}-subgroups}, J.
  Algebra \textbf{70} (1981), no.~1, 216--228.

\bibitem[BK07]{KonovalovM11}
V.~Bovdi and A.~Konovalov, \emph{Integral group ring of the first {M}athieu
  simple group}, Groups {S}t. {A}ndrews 2005. {V}ol. 1, London Math. Soc.
  Lecture Note Ser., vol. 339, Cambridge Univ. Press, Cambridge, 2007,
  pp.~237--245.

\bibitem[BK11]{AndreasKimmi}
A.~B{\"a}chle and W.~Kimmerle, \emph{On torsion subgroups in integral group
  rings of finite groups}, J. Algebra \textbf{326} (2011), 34--46.

\bibitem[BM15a]{PSL2p3}
A.~B{\"a}chle and L.~Margolis, \emph{Torsion subgroups in the units of the
  integral group ring of {$\operatorname{PSL}(2,p^3)$}}, Arch. Math. (Basel)
  \textbf{105} (2015), no.~1, 1--11.

\bibitem[BM15b]{HeLPPaper}
A.~B{\"a}chle and L.~Margolis, \emph{Help -- a {G}{A}{P}-package for torsion
  units in integral group rings}, http://arxiv.org/abs/1507.08174, preprint
  (2015).

\bibitem[BS59]{BrauerSuzuki}
Richard Brauer and Michio Suzuki, \emph{On finite groups of even order whose
  {$2$}-{S}ylow group is a quaternion group}, Proc. Nat. Acad. Sci. U.S.A.
  \textbf{45} (1959), 1757--1759.

\bibitem[CCN{\etalchar{+}}85]{Atlas}
J.~H. Conway, R.~T. Curtis, S.~P. Norton, R.~A. Parker, and R.~A. Wilson,
  \emph{Atlas of finite groups}, Oxford University Press, Eynsham, 1985,
  Maximal subgroups and ordinary characters for simple groups, With
  computational assistance from J. G. Thackray.

\bibitem[CL65]{CohnLivingstone}
J.~A. Cohn and D.~Livingstone, \emph{On the structure of group algebras {I}},
  Canadian Journal of Mathematics \textbf{17} (1965), 583--593.

\bibitem[CR90]{CR1}
C.~W. Curtis and I.~Reiner, \emph{Methods of representation theory. {V}ol.
  {I}}, Wiley Classics Library, John Wiley \& Sons, Inc., New York, 1990, With
  applications to finite groups and orders, Reprint of the 1981 original, A
  Wiley-Interscience Publication.

\bibitem[DJ96]{DokuchaevJuriaans}
M.~A. Dokuchaev and S.~O. Juriaans, \emph{Finite subgroups in integral group
  rings}, Canad. J. Math. \textbf{48} (1996), no.~6, 1170--1179.

\bibitem[Gor69]{GorensteinCentralizers}
D.~Gorenstein, \emph{Finite groups the centralizers of whose involutions have
  normal {$2$}-complements}, Canad. J. Math. \textbf{21} (1969), 335--357.

\bibitem[Gow76]{Gow}
R.~Gow, \emph{Schur indices of some groups of {L}ie type}, J. Algebra
  \textbf{42} (1976), no.~1, 102--120.

\bibitem[GW65]{GorensteinWalter}
D.~Gorenstein and J.~H. Walter, \emph{The characterization of finite groups
  with dihedral {S}ylow {$2$}-subgroups. {I}}, J. Algebra \textbf{2} (1965),
  85--151.

\bibitem[HB82]{HuppertIII}
Bertram Huppert and Norman Blackburn, \emph{Finite groups. {III}}, Grundlehren
  der Mathematischen Wissenschaften [Fundamental Principles of Mathematical
  Sciences], vol. 243, Springer-Verlag, Berlin-New York, 1982.

\bibitem[Her01]{HertweckIso}
M.~Hertweck, \emph{A counterexample to the isomorphism problem for integral
  group rings}, Ann. of Math. (2) \textbf{154} (2001), no.~1, 115--138.

\bibitem[Her07]{HertweckBrauer}
M.~Hertweck, \emph{Partial {A}ugmentations and {B}rauer character values of
  torsion units in group rings}, http://arxiv.org/abs/math/0612429, preprint
  (2007).

\bibitem[Her08]{HertweckCpCp}
M.~Hertweck, \emph{Unit groups of integral finite group rings with no noncyclic
  abelian finite {$p$}-subgroups}, Comm. Algebra \textbf{36} (2008), no.~9,
  3224--3229.

\bibitem[HHK09]{HertweckHoefertKimmerle}
M.~Hertweck, C.R. H{\"o}fert, and W.~Kimmerle, \emph{Finite groups of units and
  their composition factors in the integral group rings of the group {${\rm
  PSL}(2,q)$}}, J. Group Theory \textbf{12} (2009), no.~6, 873--882.

\bibitem[Hig40]{HigmanThesis}
G.~Higman, \emph{Units in group rings}, D. phil. thesis, Oxford Univ., 1940.

\bibitem[H{\"o}f08]{Hoefert}
C.R. H{\"o}fert, \emph{Bestimmung von {K}ompositionsfaktoren endlicher
  {G}ruppen aus {B}urnsideringen und ganzzahligen {G}ruppenringen},
  Doktorarbeit, Universit{\"a}t Stuttgart, 2008,
  http://elib.uni-stuttgart.de/opus/frontdoor.php?source\_opus=3541\&la=de.

\bibitem[JdR16]{GRG1}
E.~Jespers and {\'A}.~del R{\'{\i}}o, \emph{{Group ring groups. Volume 1:
  Orders and generic constructions of units.}}, Berlin: De Gruyter, 2016.

\bibitem[Kim07]{KimmiC2C2}
W.~Kimmerle, \emph{Torsion units in integral group rings of finite insoluble
  groups}, Oberwolfach Reports \textbf{4} (2007), no.~4, 3229--3230, Abstracts
  from the mini-workshop held November 25--December 1, 2007, Organized by Eric
  Jespers, Zbigniew Marciniak, Gabriele Nebe, and Wolfgang Kimmerle.

\bibitem[Kim15]{KimmerleSylow}
\bysame, \emph{Sylow like theorems for $\mathrm{V}(\mathbb{Z}{G})$}, Int. J.
  Group Theory \textbf{4} (2015), no.~4, 49--59.

\bibitem[KS04]{Kurzweil}
H.~Kurzweil and B.~Stellmacher, \emph{The theory of finite groups. an
  introduction}, Universitext, Springer-Verlag, New York, 2004, Translated from
  the 1998 German original.

\bibitem[LP89]{LutharPassiA5}
I.S. Luthar and I.B.S. Passi, \emph{Zassenhaus conjecture for {$A_5$}}, Proc.
  Indian Acad. Sci. Math. Sci. \textbf{99} (1989), no.~1, 1--5.

\bibitem[MRSW87]{MarciniakRitterSehgalWeiss}
Z.~Marciniak, J.~Ritter, S.~K. Sehgal, and A.~Weiss, \emph{Torsion units in
  integral group rings of some metabelian groups {II}}, J. Number Theory
  \textbf{25} (1987), no.~3, 340--352.

\bibitem[San81]{Sandling}
R.~Sandling, \emph{Graham {H}igman's thesis ``{U}nits in group rings''},
  Integral representations and applications ({O}berwolfach, 1980), Lecture
  Notes in Math., vol. 882, Springer, Berlin-New York, 1981, pp.~93--116.

\bibitem[Val94]{Valenti}
Angela Valenti, \emph{Torsion units in integral group rings}, Proc. Amer. Math.
  Soc. \textbf{120} (1994), no.~1, 1--4.

\bibitem[Wei88]{Weiss88}
A.~Weiss, \emph{Rigidity of {$p$}-adic {$p$}-torsion}, Ann. of Math. (2)
  \textbf{127} (1988), no.~2, 317--332.

\bibitem[Whi13]{White}
D.~L. White, \emph{Character degrees of extensions of {${\rm PSL}_2(q)$} and
  {${\rm SL}_2(q)$}}, J. Group Theory \textbf{16} (2013), no.~1, 1--33.

\bibitem[Zas35]{ZassenhausKennzeichnung}
H.~Zassenhaus, \emph{Kennzeichnung endlicher linearer {G}ruppen als
  {P}ermutationsgruppen}, Abh. Math. Sem. Univ. Hamburg \textbf{11} (1935),
  no.~1, 17--40.

\end{thebibliography}

\bigskip

Leo Margolis, Departamento de Matem\'aticas, Facultad de Matem\'aticas, Universidad de Murcia, 30100 Murcia, Spain.
\emph{leo.margolis@um.es}

\end{document}